\def\cB{\mathcal{B}}
\newcommand{\scr}{\mathcal}
\newcommand{\eps}{\epsilon}
\renewcommand{\v}{\mathbf{v}}
\newcommand{\mbf}[1]{\mathbf{#1}}
\renewcommand{\P}[1]{\mathbb{P}\left[ #1 \right]}
\newcommand{\E}[1]{\mathbb{E}\left[ #1 \right]}
\newcommand{\of}[1]{\left( #1 \right) }
\newcommand{\parens}[1]{\left( #1 \right) }
\newcommand{\abs}[1]{\left| #1 \right|}
\newcommand{\sqbs}[1]{\left[ #1 \right]}
\newcommand{\braces}[1]{\left\{ #1 \right\}}
\newcommand{\qol}{z}
\newcommand{\pme}[1]{\of{1 \pm #1\eps}}
\newcommand{\rdup}[1]{{\left\lceil #1 \right\rceil }}
\theoremstyle{plain}
\newtheorem{lemma}{Lemma}
\newtheorem{defn}{Definition}
\newtheorem{fact}{Fact}
\newtheorem{thm}{Theorem}
\theoremstyle{definition}
\newtheorem{procedure}{Procedure}
\definecolor{brown}{cmyk}{0, 0.72, 1, 0.45}
\definecolor{grey}{gray}{0.5}
\def\e{\epsilon}    
  \def\k{\kappa}
  \def\n{\nu} 
  \def\s{\sigma}
\newcommand{\whp}{\textbf{whp}}
\newcommand{\ignore}[1]{}
\def\cE{{\cal E}}
\def\w{{\bf w}}
\newcommand{\beq}[1]{\begin{equation}\label{#1}}
\def\eeq{\end{equation}}
\def\qs{{\bf qs}}
\def\ej{{\bf e}}
\date{}
\title{Packing tight Hamilton cycles in uniform hypergraphs}
\author{Deepak Bal and
Alan Frieze \thanks{Research supported in part by NSF award
DMS-0753472.}\\
Department of Mathematical Sciences,\\ Carnegie Mellon University,\\ Pittsburgh, PA 15213.
}
\begin{document}
\maketitle
\begin{abstract}
We say that a $k$-uniform hypergraph $C$ is a Hamilton cycle of type
$\ell$, for some $1\le \ell \le k$, if there exists a cyclic
ordering of the vertices of $C$ such that every edge consists of $k$
consecutive vertices and for every pair of consecutive edges
$E_{i-1},E_i$ in $C$ (in the natural ordering of the edges) we have
$|E_{i-1}\setminus E_i|=\ell$. We define a class of $(\e,p)$-regular hypergraphs,
that includes random hypergraphs, for which we can prove the existence
of a decomposition of almost all edges into type $\ell$ Hamilton cycles, where
$\ell<k/2$.
\end{abstract}
\section{Introduction}
This paper follows a line of work initiated by Frieze and Krivelevich \cite{FK} and
continued by Frieze, Krivelevich and Loh \cite{FKL}. We are given a $k$-regular hypergraph $H$
($k$-graph)
with certain pseudo-random properties and we show that almost almost all of the edges of
$H$ can be packed into edge disjoint Hamilton cycles of a particular type.

The paper \cite{FKL} begins with a good survey of this question which we will only give a sketch
here. When $k=2$ we are dealing with graphs. Frieze and Krivelevich \cite{FK0} showed that
the edge set of
dense graphs with a certain pseudo-random structure typified by random graphs could be almost
decomposed into edge disjoint Hamilton cycles. Knox, K\"uhn and Osthus \cite{KKO} tightened
the implied result when restricted to random graphs.

The paper \cite{FK} extended this to hypergraphs. There are various definitions of a Hamilton
cycle in a hypergraph. We will use the following:
Let $H=(V=[n],E)$ be a $k$-graph i.e. $E=\{e_1,e_2,\ldots,e_m\}$ where $e_j$
is a $k$-subset of $V$ for $j=1,2,\ldots,m$.
and let $\ell  < k$ be given where $\ell \mid n$.
A Hamilton cycle of {\em type} $\ell $ is a sequence
$f_1,f_2,\ldots,f_{\nu_\ell },\,\nu_\ell =n/\ell $ of edges
where $|g_i=f_{i+1}\setminus f_i|=\ell $ for $i=1,2,\ldots,\nu_\ell $
($f_{\nu_\ell +1}=f_1$) and $V=\bigcup_{i=1}^{\nu_\ell }g_i$. The paper \cite{FK} deals with the case
$\ell \geq k/2$ and described conditions under which almost all of the edges
of a hypergraph could be partitioned into Hamilton cycles.
The case $\ell <k/2$ could not be handled by the methods in \cite{FK}, but
\cite{FKL} shows how to deal with the case $k=3,\ell =1$. The purpose of this paper is to
extend the analysis of \cite{FKL} to the case where $k\geq 4$ and $\ell <k/2$.

We first give our notion of pseudo-randomness: We use the following notation throughout.
$$2\leq z=\rdup{\frac{k-\ell }{\ell }}\ and\ q=\ell z\qquad satisfies\ k/2<k-\ell\leq q<k.$$
\begin{defn}\label{def1}
 We say that an $n$-vertex k-graph $H$, is $(\eps,p)$-regular if the
 following holds. Let $d \in \braces{1,2,\ldots,\ell }$ and let
 $s \in \braces{1,2,\ldots,2\qol +2}$. Given any $s$ distinct
 $(k-d)$-sets, $A_1,\ldots,A_s$, such that $\abs{\bigcup_iA_i} \leq k+2q$,
 there are $(1 \pm \eps)\frac{n^d}{d!}p^s$ sets of $d$ vertices, $D$, such that all of
 $A_1\cup D,\ldots,A_s\cup D$ are edges of $H$.
 \footnote{$A=(1\pm \e)B$ if $(1-\e)B\leq A\leq (1+\e)B$}
\end{defn}

We now give our main theorem:
\footnote{The notation $a_n\gg b_n$ is short for $a_n/b_n\to\infty$ as $n\to\infty$.}
\begin{thm}\label{th1}
 Let $k$ and $\ell  < k/2$ be given. Let $\alpha = \frac{1}{9+7\qol^3}$.
 Suppose that $n$ is a sufficiently large multiple of $2q$ and that $\eps,n$ and
 $p$ satisfy  $$\eps^{16z+12}np^{8z} \gg \log^{8z+5}n.$$
 Let $H$ be an
 $(\eps,p)$-regular $k$-graph with $n$ vertices. Then $H$ contains
 a collection of edge disjoint Hamilton cycles of type $\ell $ that contains
all but at most $\eps^{\alpha}$-fraction of its edges.
\end{thm}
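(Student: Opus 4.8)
The plan is to follow the template of Frieze--Krivelevich \cite{FK0} and Frieze--Krivelevich--Loh \cite{FKL}: peel off a small pseudo-random ``helper'' sub-hypergraph, decompose almost all of the remaining edges into spanning subhypergraphs that are vertex-disjoint unions of type-$\ell$ cycles, and then splice the cycles of each such factor into a single type-$\ell$ Hamilton cycle, using the helper together with the local counts guaranteed by Definition~\ref{def1}. \emph{Splitting.} I would first randomly partition $E(H)$ into a helper $\Gamma$ holding an $\eps^{\b}$-fraction of the edges, for a suitable $\b=\b(\alpha,z)>\alpha$, and a bulk $H_1=H\setminus\Gamma$. A Chernoff bound for each of the $n^{O(k+q)}$ configurations named in Definition~\ref{def1}, followed by a union bound over them, shows that \whp\ $\Gamma$ is $(2\eps,\eps^{\b}p)$-regular and $H_1$ is $(2\eps,(1-\eps^{\b})p)$-regular; this is the first point at which the largeness of $n$ and the lower bound on $\eps^{16z+12}np^{8z}$ are used.

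\emph{Factoring the bulk.} Next I would decompose all but an $o(1)$-fraction of $E(H_1)$ into ``type-$\ell$ cycle factors'' $F_1,\dots,F_M$, where each $F_t$ is a spanning subhypergraph that is a vertex-disjoint union of type-$\ell$ cycles; such a factor uses exactly $n/\ell$ edges, so $M=(1-o(1))\,\ell\abs{E(H_1)}/n$. Since in a type-$\ell$ cycle each vertex lies in $\rdown{k/\ell}$ or $\rdup{k/\ell}$ edges, a cycle factor is an almost-regular spanning subhypergraph of prescribed local shape; a Pippenger--Spencer / R\"odl-nibble argument on an auxiliary hypergraph whose edges encode the admissible local shapes, fed by the counts from Definition~\ref{def1}, produces the near-decomposition, and the pseudo-randomness forces \whp\ each $F_t$ to consist of only $O(\log n)$ cycles.

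\emph{Merging.} Finally, for each $t$ I would merge the $O(\log n)$ cycles of $F_t$ into one type-$\ell$ Hamilton cycle $C_t$. To splice two type-$\ell$ cycles I would open each into a type-$\ell$ path by deleting $O(1)$ edges and then join the two paths through a connecting gadget: a tight ``bridge'' that rearranges the ends over $\Theta(q)$ vertices by $\Theta(z)$ successive edge-completions, alternately extending from the two severed ends until they meet, borrowing a bounded number of vertices from a reservoir inside $\Gamma$ and using $O(1)$ helper edges. Each completion is an instance of Definition~\ref{def1} with $d\le\ell$, $s\le 2z+2$ and ground set of size at most $k+2q$ --- precisely the regime the definition covers --- and the simultaneous ($s>1$) form is what lets me keep all the gadgets pairwise edge-disjoint and disjoint from the parts already built. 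Discarding the $O(\log n)$ deleted edges per $F_t$ together with the edges of $\Gamma$ that are never used, the total number of discarded edges is $\bigl(o(1)+\eps^{\b}\bigr)\abs{E(H)}$, which for the chosen $\b$ and with $\alpha=\frac{1}{9+7z^3}$ is at most an $\eps^{\alpha}$-fraction of $\abs{E(H)}$, as required.

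\emph{Main obstacle.} The crux is the merging step when $\ell<k/2$. Then consecutive edges of a type-$\ell$ cycle overlap in $k-\ell>k/2$ vertices, so a cycle cannot be rerouted by a single local swap as in the graph case: the rerouting must be carried out by a gadget spanning $\Theta(q)$ vertices and realised by $\Theta(z)$ nested edge-completions, and one must produce enough pairwise-edge-disjoint such gadgets while leaving the rest of the structure intact. Propagating the $(1\pm\eps)$ error terms through these $\Theta(z)$ nested applications of Definition~\ref{def1}, and through the corresponding losses in the splitting and nibble steps, is exactly what dictates the exponents in the hypothesis $\eps^{16z+12}np^{8z}\gg\log^{8z+5}n$; verifying that all of this bookkeeping closes is the bulk of the work, even though each individual estimate is a routine Chernoff/union-bound computation.
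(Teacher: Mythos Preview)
Your plan is not the paper's approach, and it has a genuine gap at the ``factoring the bulk'' step.  The paper never produces type-$\ell$ cycle factors at all.  Its central device is a digraph encoding: for a random permutation $\sigma$ it groups $[n]$ into consecutive $q$-tuples and builds a digraph $D_\sigma$ on these $q$-tuples, with an arc $\v_1\to\v_2$ whenever the $z$ hyperedges $\ej_0,\dots,\ej_{z-1}$ spanning $\v_1\v_2$ (see \eqref{x1}) all lie in $H$.  Hamilton cycles in $D_\sigma$ then \emph{are} type-$\ell$ Hamilton cycles in $H$, and distinct arcs own disjoint hyperedge sets.  One takes $r$ random permutations, resolves ownership conflicts among the $D_{\sigma_i}$ by a random labelling (Procedure~1), proves each resulting $D_{\sigma_i}'$ is $(\Theta(\eps),(p/\kappa)^z)$-regular (Lemma~\ref{everyDi'Uniform}(a)), and feeds these digraphs to the directed Hamilton-packing theorem of \cite{FKL} as a black box.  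Lemma~\ref{everyDi'Uniform}(b) shows the leftover hypergraph is again $(\eps',p')$-regular, and one iterates until only an $\eps^\alpha$-fraction of edges remains.

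The step in your sketch that I do not see how to make work is the appeal to Pippenger--Spencer for a near-decomposition of $H_1$ into type-$\ell$ cycle factors.  The nibble yields near-perfect matchings in an auxiliary hypergraph with bounded codegree; to get cycle factors you would need the auxiliary hyperedges to encode entire spanning cycle factors (size $n/\ell$), which is far outside the nibble's regime, and you do not say what alternative auxiliary structure you intend.  The subsequent claim that pseudo-randomness forces each $F_t$ to have only $O(\log n)$ cycles is likewise unsupported: nothing in Definition~\ref{def1} or in a nibble argument bounds cycle lengths from below.  Finally, in the merging step you propose ``borrowing a bounded number of vertices from a reservoir inside $\Gamma$'', but $\Gamma$ is an edge set and your $F_t$ are spanning, so there are no spare vertices; any bridge must sit on vertices already present in the two cycles, and you have not argued that the $\Theta(z)$ required helper edges can be found in $\Gamma$, edge-disjointly, for all $M\cdot O(\log n)$ merges.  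The paper's digraph encoding sidesteps all of this: the merging and edge-disjointness are handled automatically by the directed Hamilton-packing black box, and the only bookkeeping left is the regularity estimates in Lemma~\ref{everyDi'Uniform}.
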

Our bounds on parameters $\e,p$ are unlikely to be tight and it would be interesting to sharpen our
bounds. In which case, we will not fight too hard for our bounds. In particular, we will 
replace products $(1\pm a\e)(1\pm b\e)$ and $(1\pm a\e)(1\pm b\e)^{-1}$ by $(1\pm(a+b+1)\e)$ without
further comment. Furthermore, we are really only interested in the case where $\e$ is small and so
we will always assume that $\e$ is sufficiently small for all such simplifications.

\section{Proof overview and organization}
\label{sec:overview}

The key insight in the proof of Theorem \ref{th1} is the
following connection between type $\ell $ Hamilton cycles in $H$ and Hamilton
cycles in an associated digraph.

\begin{defn}
 Given two ordered $q$-tuples of vertices $\v_1 = (v_1,\ldots,v_q),
 \v_2 = (v_{q+1},\ldots,v_{2q})$ of a $k$-uniform hypergraph $H$, we define
\beq{x1}
 \ej_i=\ej_i(\v_1,\v_2)=\braces{v_{i\ell +1}, v_{i\ell +2}, \ldots, v_{i\ell +k}}
 \quad \text{ for all } i=0,\ldots,\qol -1.
\eeq
We say that $\v_1$ precedes $\v_2$ if the edges $\ej_0,\ej_1,\ldots,\ej_{z-1}$
are all present in $H$. We say that $(\v_1,\v_2)$ {\em owns} these edges.
\end{defn}
Notice that the edges $\ej_0,\ldots,\ej_{z-1}$ are all contained in $\{v_1,v_2,\ldots,v_{2q}\}$.
$\ej_0$ consists of the first $k$ vertices of $\v_1\v_2$. We shift $\ell$ places to the right to get
$\ej_1$. We continue shifting by $\ell$ places until a further shift would take us
outside $\v_1\v_2$.

For a permutation $\s=(v_1, v_2,\ldots,v_i=\s(i),\ldots, v_n)$
of the vertices of $H$, define a $\n_q=n/q$-vertex
digraph $D_\s$ with vertex set $V_\s=\{\v_i=(v_{(i-1)q+1},\ldots, v_{iq}):\;i=1,2,\ldots,\n_q\}$.
Place an arc (directed edge) from $\v_i$ to $\v_j$ if and
only if $\v_i$ precedes $\v_j$.  In this construction, Hamilton cycles
in $D_\s$ give rise to type $\ell $ Hamilton cycles in $H$.
Indeed the Hamilton cycle $(\w_1,\w_2,\ldots,\w_{n/q})$ of $D_\s$ where
$\w_i = (w_{(i-1)q+1},\ldots,w_{iq})$ yields a Hamilton cycle in $H$ made up from the
edges owned by the arcs $(\w_i,\w_{i+1}),\,i=1,\ldots,\n_q$.
This cycle is $(e_1,e_2,\ldots,e_{\n_\ell })$ where
$e_{az+b}=\{w_{((a-1)z+b)\ell +1},\ldots,w_{((a-1)z+b)\ell +k}\}$ for $a\in[\n_q]$
and $b\in\{0,\ldots,z-1\}$.

We want disjoint Hamilton cycles in $D_\s$ to yield disjoint cycles in $H$.
This follows from the fact that the
sets of edges owned by distinct arcs $(\v_a,\v_b)$ and $(\v_c,\v_d)$ are disjoint.
Suppose then that some edge $e$ of $H$ is owned by both pairs.
It follows from the definition of precedes that
the first element of $e$ (in the order defined by $\s$) is in $\v_a$ and $\v_b$ and so $a=b$.
The $q+1$st element of $e$ is in $\v_c$ and $\v_d$ and so $c=d$, contradiction.

The basic idea of the proof is to take a large number of
random permutations $\s_1,\s_2,\ldots,\s_r$ and construct
the digraphs $D_{\s_1},D_{\s_2},\ldots,D_{\s_r}$. Then take subgraphs
$D_{\s_i}'\subseteq D_{\s_i}$ for $i=1,2,\ldots,r$ so that the edges of $H$ owned by
$D_{\s_i}' ,D_{\s_j}'$ are disjoint for $i\neq j$. It will be argued that each $D_{\s_i}'$ has
certain regularity properties implying that its arc set can be almost decomposed into edge
disjoint Hamilton cycles. We then take the edges owned by the arcs of all the Hamilton cycles
in all the $D_{\s_i}'$ and remove them to create a new hypergraph $H'$. We then argue that
\whp\ $H'$ is $(\e',p')$-regular. We repeat this process until we have covered almost all
of the edges of $H$ by Hamilton cycles.

We now give the regularity properties that we require of our digraphs:
\begin{defn}
 We say that a $\n$-vertex digraph is $(\eps,p)$-regular if it satisfies the following properties:
\begin{enumerate}[{\bf (i)}]
 \item Every vertex $a$ has out-degree $d^+(a) = (1\pm \eps)\n p$
 and in-degree $d^-(a) = (1\pm\eps)\n p.$
 \item For every pair of distinct vertices $a,b$, all three of
 the following quantities are $(1\pm\eps)\n p^2$: the number of
 common out neighbors $d^+(a,b)$, the number of common in neighbors
 $d^-(a,b)$, and the number $d^{+-}(a,b)$ of out-neighbors of $a$
 which are also in-neighbors of $b$.
 \item Given any four vertices $a,b,c,d$, which are all distinct
 except for the possibility $b=c$, there are $(1\pm\eps)\n p^4$
 vertices $x$ such that $\overrightarrow{ax}, \overrightarrow{xb},
 \overrightarrow{cx}, \overrightarrow{xd}$ are all directed edges.
\end{enumerate}
\end{defn}
In this context, we have the following Theorem of Frieze, Krivelevich and Loh \cite{FKL}:
\begin{thm}
 Suppose that $\eps^{11}np^8 \gg \log^5n$, and $n$ is a sufficiently large
 even integer. Then every $(\eps,p)$-regular digraph can have its edges partitioned
 into a disjoint union of directed Hamilton cycles, except for a set of at most
 $\eps^{1/8}$-fraction of its edges.
\end{thm}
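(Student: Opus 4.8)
\noindent\emph{Proof idea.}
The plan is to peel off directed Hamilton cycles one round at a time, maintaining the invariant that the digraph $D_m$ left after $m$ rounds is $(\eps_m,p_m)$-regular with $p_m=p-m/n$ and $\eps_m=\eps+o(1)$. The basic device is the bipartite \emph{split graph} $B=B(D)$ with parts $\{a^+:a\in V\}$ and $\{a^-:a\in V\}$ and an edge $a^+b^-$ exactly when $(a,b)$ is an arc of $D$; a perfect matching of $B$ is precisely a $1$-factor of $D$, i.e.\ a spanning set of vertex-disjoint directed cycles. Property (i) makes $B$ nearly $np$-regular and properties (ii) give the codegree control that turns $B$ into a good expander, so $B$ has a perfect matching; more importantly, permanent estimates — Br\'egman's inequality from above, the Egorychev--Falikman and Schrijver bounds from below, each sharpened by the pseudorandomness of $D$ so that the doubly stochastic scaling of the biadjacency matrix is almost uniform — show that for any fixed arc $(a,b)$ a proportion $(1\pm o(1))/(np)$ of the perfect matchings of $B$ use $a^+b^-$, with the analogous $\big((1\pm o(1))/(np)\big)^t$ estimate for any partial matching on $t=o(\sqrt n)$ arcs.

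In each round I would sample a uniformly random $1$-factor $F$ of the current digraph. Combining the arc-count $\approx(np)^t/t$ of directed $t$-cycles with the permanent estimate $\pr{C\subseteq F}\approx(np)^{-t}$, the expected number of directed $t$-cycles of $D$ contained in $F$ is $(1\pm o(1))^t/t$; summing over $t$ (and handling cycles of length $\ge\sqrt n$ by the trivial bound that there are at most $\sqrt n$ of them) gives $O(\log n)$ cycles in expectation, so \whp\ $F$ has at most $\log^2 n$ cycles, and the same estimates show that $F$ is ``spread out'': each arc is used with probability $(1\pm o(1))/(np)$. I then patch $F$ into a single directed Hamilton cycle: while $F$ has two or more cycles, take cycles $C_1\ni(a,b)$ and $C_2\ni(c,d)$ and replace the arcs $(a,b),(c,d)$ by $(a,d),(c,b)$, which splices $C_1$ and $C_2$ into one cycle, choosing the replacement arcs among those present in $D$ and not yet used as patch arcs this round; a short cycle is instead spliced in through an intermediate vertex furnished by property (iii). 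Each splice forbids only $O(1)$ further arcs and there are at most $\log^2 n$ of them, while (ii)/(iii) provide $\gg np^2$ (resp.\ $\gg np^4$) candidate arcs at every step, so a splice is always available; the resulting Hamilton cycle $\hat C$ differs from $F$ in only $O(\log^2 n)$ arcs.

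I would then delete $E(\hat C)$ and repeat. In-degrees and out-degrees each drop by exactly $1$ per round, so (i) holds for $D_m$ with $p_m=p-m/n$. For (ii) and (iii) the point is that a uniformly random $1$-factor of $D_{m-1}$ decreases each codegree $d^+(a,b),d^-(a,b),d^{+-}(a,b)$ (resp.\ each $4$-point count) by $(1\pm o(1))$ times its expectation, namely by $\approx 2p_{m-1}$ (resp.\ $\approx 4p_{m-1}^3$), so these counts track $np_m^2$ (resp.\ $np_m^4$) along a bounded-difference martingale; the only non-random ingredient, the $O(\log^2 n)$ patch arcs removed per round, perturbs each codegree by $O(np\log^2 n)$ over all rounds, which is absorbed because $\eps np^2\gg np\log^2 n$. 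Chasing the error terms and union-bounding over the $n^{O(1)}$ codegree events and $O(n)$ rounds is exactly what the hypothesis $\eps^{11}np^8\gg\log^5n$ buys. I would stop after $m^\ast=(1-\tfrac12\eps^{1/8})np$ rounds, when $p_{m^\ast}=\Theta(\eps^{1/8}p)$ is still large enough that the invariant has survived; the $m^\ast$ Hamilton cycles removed together contain $(1-o(1))m^\ast n=(1-o(1))(1-\tfrac12\eps^{1/8})n^2p$ arcs, i.e.\ all but an $\eps^{1/8}$-fraction of $E(D)$.

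The crux is the regularity maintenance and the permanent estimates underneath it. One cannot afford a crude $e^{O(1)}$ factor in $\pr{(a,b)\in F}\approx 1/(np)$: the subtle but essential feature is that the $\eps$-level irregularity of $D$ cancels in the ratio defining this probability, because the number of $t$-cycles and the per-cycle probability are governed by the very same in- and out-degrees, leaving a residual error of only $(1\pm o(1))^t$ controlled by the finer codegree conditions — this is what makes $\sum_t(1\pm o(1))^t/t=O(\log n)$ and what forces the polylogarithmic slack in the hypothesis. The remaining delicate point is the concentration argument keeping the cumulative deviation of every codegree below $\eps np^2$ through all $\Theta(np)$ rounds, together with the coupling of a uniformly random $1$-factor of $D_{m-1}$ with the Hamilton cycle $\hat C$ actually removed — objects differing in only $O(\log^2 n)$ patch arcs — which is the glue that closes the induction.
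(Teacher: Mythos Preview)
The paper does not prove this theorem: it is quoted verbatim from \cite{FKL} (Frieze, Krivelevich and Loh) and invoked as a black box in Section~\ref{finish}. There is therefore no proof in the present paper to compare your attempt against; what you have written is a sketch of the argument of \cite{FKL}, not of anything here.

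For what it is worth, your outline is in the spirit of the Frieze--Krivelevich method that \cite{FKL} adapts to digraphs: pass to the bipartite split graph, sample a random $1$-factor, splice it into a Hamilton cycle, delete, and iterate while tracking regularity. Two points would need tightening before this could stand as a proof. First, the permanent estimates as stated are too strong to follow from Br\'egman above and Egorychev--Falikman/Schrijver below: those bounds pin down the permanent only up to an $e^{\Theta(n)}$ factor, whereas you need $\pr{(a,b)\in F}=(1\pm o(1))/(np)$ with an error term that survives both the sum $\sum_t(1\pm o(1))^t/t$ and $\Theta(np)$ rounds of iteration; extracting that precision from the pseudorandomness hypotheses is the technical heart of \cite{FK,FKL} and is not a one-line consequence of the classical inequalities. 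Second, your primary splice ``replace $(a,b),(c,d)$ by $(a,d),(c,b)$'' requires the arcs $(a,d)$ and $(c,b)$ to exist, and when one of the cycles is short there are too few candidate pairs for a counting argument based on property~(ii) alone; this is exactly why property~(iii) is part of the definition --- the actual merge in \cite{FKL} routes through an intermediate vertex supplied by~(iii), and you should make that the main mechanism rather than a fallback.
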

We next describe our procedure for generating the $D_{\s_i}'$:
\begin{procedure}
 This takes as input an $(\e,p)$-regular $k$-graph $H$
 with number of vertices divisible by $q$ and an integer parameter $r$.
 Let
 \begin{equation}\label{kappa}
 \kappa = \frac{6(k+1)\log{n}}{\eps^2}\ and\ r=\frac{\ell qn^{k-2}}{ k!p^{\qol -1}}\cdot\kappa.
 \end{equation}
\begin{enumerate}[{\bf (1)}]
\item Independently generate permutations $\s_1,\s_2,\ldots,\s_r$ of $[n]$.
\item Let $H_i$ be the $k$-graph made up of the edges of $H$ that are owned by the arcs of
$D_{\s_i}$.
 \item For each edge $e\in H$, let $I_e = \braces{i : e \in H_i}$.
 If $I_e \neq \emptyset$, independently select a uniformly random
 index in $I_e$ to label $e$ with.
 \item For each $i$, define the subgraph $D_{\s_i}'$ as follows: For each arc $e=(\v,\v')$
 of $D_{\s_i}$, keep the arc $e$
if and only if all $z$ of the edges owned by $e$ are labeled with $i$.
 \item For each $i$, let $H_i'$ be the $k$-graph containing all hyperedges which are
  owned by the arcs of $D_{\s_i}'$.
\end{enumerate}
\end{procedure}

Our main task is to prove
\begin{lemma}
\label{everyDi'Uniform}
 Suppose that $n,p,$ and $\eps$ satisfy
 \[\eps^{8\qol  + 2}np^{8\qol } \gg \log^{4\qol  + 1}{n}.\]
 Let $H$ be an $(\eps,p)$-regular $k$-graph on $n$ vertices ($n$ divisible by $q$). Suppose
 that we carry out Procedure 1.  Then, with probability $1-o(n^{-1})$:
 \begin{enumerate}[{\bf (a)}]
\item Every  $D_{\s_i}'$ is $(12z^2\eps,(p/\kappa)^{\qol })$-regular.
 \item $H'$ is an $(\eps',p')$-regular $k$-graph where
$H'=H\setminus \bigcup_{i=1}^rH_i'$ is the subgraph of $H$ obtained by deleting the edges
 of the $H_i'$s. Here
 \[\eps' = \eps\of{1+\frac{7\qol^3}{\kappa^{\qol-1}}}\
 and\  p'=p\of{1-\frac{1}{\kappa^{\qol-1}}}\]
  \end{enumerate}
\end{lemma}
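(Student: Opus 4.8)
Here is how I would attack Lemma~\ref{everyDi'Uniform}.

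\medskip
\noindent\textbf{Setup.}
The randomness in Procedure~1 comes in two independent stages: the permutations $\s_1,\ldots,\s_r$, and then the labelling in Step~(3). The plan is to prove, \whp\ over the permutations alone, a package of first-moment facts --- that each $D_{\s_i}$ is ``$(\eps_1,p^{\qol})$-regular'' for some $\eps_1=O(z\eps)$, and that $|I_f|=(1\pm\eps)\kappa$ for every edge $f$ of $H$ --- and then to condition on any permutation configuration realising this package and run the label randomness to deduce (a) and (b) separately.

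\medskip
\noindent\textbf{Stage~1: the permutations.}
Fix $i$ and a block $\v$ of $\s_i$; its out-degree in $D_{\s_i}$ counts the blocks $\v'$ with $\ej_0(\v,\v'),\ldots,\ej_{\qol-1}(\v,\v')\in H$. The essential observation is that these $\qol$ edges together involve only the first $k-\ell$ coordinates of $\v'$, so one may build up $\v'$ by revealing those coordinates in groups of at most $\ell$ in such a way that at each of the $\qol$ steps exactly one partial edge --- on a vertex set of size at most $k-\ell\le k+2q$ --- is extended by at most $\ell$ new vertices; applying Definition~\ref{def1} with $s=1$, $d\le\ell$ at each step and multiplying yields $(1\pm O(z\eps))n^{q}p^{\qol}$ such $q$-tuples $\v'$, so $\E{d^+(\v)}=(1\pm O(z\eps))\n_q p^{\qol}$. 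Concentration comes from a bounded-differences inequality for random permutations: with $\v$ fixed, altering the vertex in one other position changes at most two blocks and hence $d^+(\v)$ by $O(1)$, and since $\eps^2np^{2z}\gg\log n$ (a weakening of the hypothesis) the resulting tail survives a union bound over the $\n_q$ blocks and the $r=\mathrm{poly}(n)$ indices. Property~(ii) is the same device with $2\qol$ edges on a vertex set of size at most $3q\le k+2q$; property~(iii) is the same but with four families of $\qol$ edges meeting at a common block $\v_x$ --- here one reveals the coordinates of $\v_x$ in increasing order, so that the families $\overrightarrow{\v_a\v_x},\overrightarrow{\v_c\v_x}$ finish being completed (at coordinate $k-\ell$ of $\v_x$) before the families $\overrightarrow{\v_x\v_b},\overrightarrow{\v_x\v_d}$ are all completed together at coordinate $q$, and one checks that at every step the active partial edges span at most $3(k-\ell)\le k+2q$ vertices, so that Definition~\ref{def1} applies with $s\le 2\qol+2$ --- giving expected count $(1\pm O(z\eps))\n_q p^{4\qol}$ with Azuma again supplying concentration (this four-vertex count is what requires $\eps^2np^{8\qol}\gg\log n$). \emph{Keeping every configuration inside the bound $|\bigcup_iA_i|\le k+2q$ of Definition~\ref{def1} is the main delicacy of this stage.} Finally, $|I_f|=\sum_{i=1}^r\mathbbm 1[f\in H_i]$ is a sum of $r$ \emph{independent} indicators, and a build-up exactly as above (using $q=\ell\qol$ and the value of $r$) shows $\pr{f\in H_i}=(1\pm O(z\eps))\kappa/r$, so $\E{|I_f|}=(1\pm O(z\eps))\kappa$; Chernoff gives $\pr{|I_f|\ne(1\pm\eps)\kappa}\le\exp(-\Omega(\eps^2\kappa))=n^{-\Omega(k)}$, which beats the union bound over the at most $n^{k}$ edges because $\kappa=\Theta(\eps^{-2}\log n)$.

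\medskip
\noindent\textbf{Stage~2: the labels.}
Condition now on permutations realising that package. For (a): by Step~(4), an arc $e=(\v,\v')$ of $D_{\s_i}$ is kept iff the $\qol$ edges it owns are all labelled $i$; these edges are distinct (an edge lies in at most one arc of $D_{\s_i}$, Section~\ref{sec:overview}) and each label is an independent uniform choice, so $\pr{e\text{ kept}}=\prod_g|I_g|^{-1}=(1\pm O(z\eps))\kappa^{-\qol}$. Since the edge-sets owned by the distinct out-arcs at $\v$ are pairwise disjoint, the events ``$(\v,\v')$ kept'' over the various $\v'$ are mutually independent, so $d^+_{D'_{\s_i}}(\v)$ is a sum of independent indicators of mean $d^+_{D_{\s_i}}(\v)\cdot(1\pm O(z\eps))\kappa^{-\qol}=(1\pm O(z\eps))\n_q(p/\kappa)^{\qol}$, and Chernoff plus a union bound over vertices and over $i$ gives $d^+_{D'_{\s_i}}(\v)=(1\pm O(z^2\eps))\n_q(p/\kappa)^{\qol}$, using $\eps^{2\qol+2}np^{\qol}\gg\log^{\qol+1}n$. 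In-degrees, the codegree quantities and the four-vertex count are identical with $\kappa^{-2\qol},\kappa^{-4\qol}$ in place of $\kappa^{-\qol}$ and correspondingly stronger instances of the hypothesis, and the accumulated constants fit inside $12\qol^2\eps$; this is (a). For (b): an edge $f\in H$ is deleted iff, writing $i_0$ for its label, the unique arc of $D_{\s_{i_0}}$ owning $f$ has all $\qol$ owned edges labelled $i_0$ --- conditionally on $f$'s own label this asks the other $\qol-1$ co-owned edges to receive label $i_0$, probability $(1\pm O(z\eps))\kappa^{-(\qol-1)}$. Hence $\pr{f\in H'}=1-(1\pm O(z\eps))\kappa^{-(\qol-1)}$, matching $p'=p(1-\kappa^{-(\qol-1)})$. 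To obtain $(\eps',p')$-regularity, for every admissible $(A_1,\ldots,A_s,d)$ I would estimate $X:=\#\{D:A_m\cup D\in H'\ \forall m\}$ by writing $X=\sum_{T\subseteq[s]}(-1)^{|T|}N_T$ with $N_T=\#\{D:A_m\cup D\in H\ \forall m,\ A_m\cup D\text{ deleted}\ \forall m\in T\}$: here $N_\emptyset=(1\pm\eps)\tfrac{n^d}{d!}p^s$ and $\E{N_T}=(1\pm O(z\eps))^{|T|}\kappa^{-(\qol-1)|T|}N_\emptyset$ up to negligible corrections from overlapping co-ownerships, and summing the geometric series returns $(1\pm\eps)\tfrac{n^d}{d!}p^s(1-\kappa^{-(\qol-1)})^s=(1\pm\eps)\tfrac{n^d}{d!}(p')^s$, with all accumulated slack absorbed into $\eps'=\eps(1+7\qol^3/\kappa^{\qol-1})$.

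\medskip
\noindent\textbf{The main obstacle.}
What remains --- and what I expect to be the crux --- is the concentration of the $N_T$ (equivalently of $X$) about these means. The deletion events of distinct edges are \emph{not} independent, since they depend through co-ownership on the already-fixed permutations, and re-sampling a single label can flip the deletion status of as many as $(\qol-1)\max_g|I_g|=O(z\eps^{-2}\log n)$ edges, so naive bounded differences are useless. The route I would take is to keep the permutations fixed and expose only the (independent) labels, observing that two candidate sets $D,D'$ jointly influence $N_T$ only when some $A_m\cup D$ is co-owned (in some $D_{\s_j}$) with some $A_{m'}\cup D'$; since each edge is co-owned with only $O(z\eps^{-2}\log n)$ others, the dependency graph on the $\Theta(\tfrac{n^d}{d!}p^s)$ relevant sets $D$ has maximum degree $O(\qol^3\eps^{-2}\log n)$. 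Colouring that graph with that many colours, applying Chernoff inside each (independent) colour class and summing, gives a deviation probability small enough to survive the union bound over all $(A_1,\ldots,A_s,d)$ --- and it is exactly here that the full force of $\eps^{8\qol+2}np^{8\qol}\gg\log^{4\qol+1}n$ is needed. Carried out for every admissible $(A_1,\ldots,A_s,d)$, this establishes (b), and with it the lemma.
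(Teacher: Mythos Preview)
Your Stage~1 and part~(a) are essentially the paper's approach: the $(O(z\eps),p^{\qol})$-regularity of each $D_{\s_i}$ is the paper's Lemma~4 (iterated application of Definition~\ref{def1} plus Fact~2 for permutation concentration), the coverage $|I_f|=(1\pm O(z\eps))\kappa$ is Lemma~5, and the label-stage Chernoff is exactly how the paper passes from $D_{\s_i}$ to $D_{\s_i}'$.

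For part~(b) there is a real gap, and it stems from a misdiagnosis. You write that ``naive bounded differences are useless'' because re-labelling one edge can flip the deletion status of $O(\qol\kappa)$ hyperedges. Globally that is true, but you are not concentrating the total number of deleted edges: you are concentrating, for a \emph{fixed} $(k{-}d)$-set $A$ (or family $A_1,\ldots,A_s$), the number of $d$-sets $D$ with $A\cup D$ deleted. If changing the label of some edge $e$ affects whether $A\cup D$ is deleted, then $e$ and $A\cup D$ are owned by the same arc of some $D_{\s_j}$, hence both lie in a single $2q$-vertex window $S$; since $A\subseteq S$ is fixed, at most $\binom{2q-(k-d)}{d}=O(1)$ sets $D$ can fit. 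So the Lipschitz constant on the label product space is $O(1)$, not $O(\kappa)$, and Hoeffding--Azuma succeeds directly --- this is exactly the content of the paper's Lemmas~6 and~8. (The full hypothesis $\eps^{8\qol+2}np^{8\qol}\gg\log^{4\qol+1}n$ is actually consumed by the four-vertex count in part~(a), not here.) Your colouring workaround is unnecessary and, as written, not correct either: the indicator for a given $D$ depends on the partners of $A_m\cup D$ in \emph{whichever} $D_{\s_j}$ is selected by the label of $A_m\cup D$, hence potentially in every $j\in I_{A_m\cup D}$; the dependency graph that captures this has degree of order $\kappa^2$, not $\kappa$.

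Two smaller differences. The paper does not run full inclusion--exclusion but truncates at second order via Bonferroni, $|\mathcal B|-\sum_iX_i\le X\le|\mathcal B|-\sum_iX_i+\sum_{i<j}X_{ij}$, needing a tight estimate on $X_i$ (Lemma~6) but only a crude upper bound on $X_{ij}$ (Lemma~8). And the ``overlapping co-ownerships'' you wave away are handled by a separate permutation-stage statement (Lemma~7): with probability $1-o(n^{-1})$ every $(k{+}t)$-vertex set, $1\le t\le 2q-k$, is ``condensed'' in at most $4q+1$ of the $D_{\s_i}$; this is what pushes the second-order probability down to $O(\kappa^{-\qol})$ rather than merely $O(\kappa^{-(\qol-1)})$.
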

Part (a) enables us to find many edge disjoint Hamilton cycles and it is proved in Section
\ref{parta}. Part (b) enables us
to repeat the construction many times and is proved in Section \ref{partb}.
Section \ref{finish} shows how to use the above lemma to prove the main theorem.
\subsubsection{Random $k$-graphs}
It is as well to check that random $k$-graphs are $(\eps,p)$-regular for suitable $\e,p$.
\begin{align*}
\P{H_{n,p;k}\textrm{ is not $(\eps,p)$-regular}} &=
O(n^{k+2q})\sum_{d=1}^{\ell }\sum_{s=1}^{2\qol  +2}\P{\textrm{Bin}\sqbs{\binom{n}{d},p^s}
\neq (1 \pm \eps)\frac{n^d}{d!}p^s} \\
&= o(1)
 \end{align*}
as long as $\eps^2np^{2\qol  + 2} \gg \log n.$ (The hidden constant in $O(n^{k+2q})$ 
allows us to use $\binom{n}{d}$ in place of $\binom{n-O(1)}{d}$).

\subsection{Concentration bounds}
\begin{fact}
 For any $\eps > 0$, there exists $c_\eps > 0$ such that any
 binomial random variable $X$ with mean $\mu$ satisfies
\[\P{\abs{X-\mu} > \eps\mu} < e^{c_\eps\mu},\]
where $c_\eps$ is a constant determined by $\eps$. When $\eps < 1$,
we may take $c_\eps = \frac{\eps^2}{3}.$
\end{fact}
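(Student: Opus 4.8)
The statement is the classical Chernoff bound, which I would prove by the standard exponential‑moment (Chernoff) argument; note that the exponent in the displayed inequality is meant to be $-c_\eps\mu$. Write $X=\sum_{i=1}^n X_i$ as a sum of independent $\{0,1\}$‑valued variables with $\P{X_i=1}=p_i$, so that $\mu=\sum_i p_i$ (the argument is unchanged whether or not the $p_i$ coincide, so this covers $X\sim\bin{n,p}$). For any real $t>0$, Markov's inequality applied to the nonnegative variable $e^{tX}$ gives $\P{X\ge(1+\eps)\mu}\le e^{-t(1+\eps)\mu}\E{e^{tX}}$, and by independence together with $1+p_i(e^t-1)\le e^{p_i(e^t-1)}$ we have $\E{e^{tX}}=\prod_{i=1}^n\of{1+p_i(e^t-1)}\le e^{\mu(e^t-1)}$. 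Hence $\P{X\ge(1+\eps)\mu}\le\exp\of{\mu\of{e^t-1-t(1+\eps)}}$, and choosing $t=\ln(1+\eps)$, the minimizer of the exponent, yields
\[\P{X\ge(1+\eps)\mu}\le\of{\frac{e^{\eps}}{(1+\eps)^{1+\eps}}}^{\!\mu}.\]
Applying Markov symmetrically to $e^{-tX}$ with $t=-\ln(1-\eps)>0$ (valid when $\eps<1$) gives $\P{X\le(1-\eps)\mu}\le\of{e^{-\eps}(1-\eps)^{-(1-\eps)}}^{\mu}$.

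To convert these into the stated form, set $\phi_+(\eps)=(1+\eps)\ln(1+\eps)-\eps$ and $\phi_-(\eps)=\eps+(1-\eps)\ln(1-\eps)$, so the two tail bounds read $e^{-\phi_+(\eps)\mu}$ and $e^{-\phi_-(\eps)\mu}$. Both functions are strictly positive for $\eps\in(0,1)$; furthermore $\phi_+(\eps)>0$ for every $\eps>0$, while for $\eps\ge1$ the lower tail is vacuous since $X\ge0$. This already gives the first assertion, with $c_\eps=\min\{\phi_+(\eps),\phi_-(\eps)\}$ (or $c_\eps=\phi_+(\eps)$ when $\eps\ge1$). For the explicit constant in the range $\eps<1$, a short power‑series estimate gives $\phi_+(\eps)=\tfrac{\eps^2}{2}-\tfrac{\eps^3}{6}+\cdots\ge\tfrac{\eps^2}{3}$ and $\phi_-(\eps)=\tfrac{\eps^2}{2}+\tfrac{\eps^3}{6}+\cdots\ge\tfrac{\eps^2}{2}$, so each one‑sided tail is at most $e^{-\eps^2\mu/3}$ and the two‑sided probability is at most $2e^{-\eps^2\mu/3}$. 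The constant factor $2$ is harmless: we only ever apply the bound with $\mu\to\infty$, where $2e^{-\eps^2\mu/3}\le e^{-c_\eps\mu}$ for any fixed $c_\eps<\eps^2/3$; thus $c_\eps=\eps^2/3$ may be used with the understanding that one loses at worst a bounded multiplicative constant, as in all standard formulations of the Chernoff bound.

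The only point needing a word of justification is the pair of elementary inequalities $\phi_+(\eps)\ge\eps^2/3$ and $\phi_-(\eps)\ge\eps^2/2$ on $(0,1)$. These follow at once from the series expansions $\phi_-(\eps)=\sum_{n\ge2}\frac{\eps^n}{n(n-1)}$, whose terms are all positive, and $\phi_+(\eps)=\sum_{n\ge2}\frac{(-1)^n\eps^n}{n(n-1)}$, where pairing consecutive terms gives $\frac{\eps^{2k}}{2k(2k-1)}-\frac{\eps^{2k+1}}{2k(2k+1)}=\frac{\eps^{2k}}{2k}\of{\frac{1}{2k-1}-\frac{\eps}{2k+1}}\ge0$ for $\eps\le1$, leaving $\phi_+(\eps)\ge\tfrac{\eps^2}{2}-\tfrac{\eps^3}{6}\ge\tfrac{\eps^2}{3}$. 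As these are entirely standard, in the write‑up I would either include this one‑line argument or simply refer to a standard text; there is no substantive obstacle.
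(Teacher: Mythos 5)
Your proof is correct: it is the standard exponential-moment (Chernoff) argument, and the paper itself states this Fact without proof, treating it as a standard concentration bound, so there is no paper proof to compare against. Two small points you handled well and should keep: the exponent in the statement is indeed a typo for $-c_\eps\mu$, and your remark that the factor $2$ from summing the two one-sided tails is harmless matches exactly how the paper uses the bound (its applications all carry an explicit leading factor of $2$, e.g.\ $2\exp\braces{-\frac{\eps^2}{3}(1-z\eps)\kappa}$), so $c_\eps=\eps^2/3$ in the form used later is fully justified.
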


\begin{fact}
 Let $X$ be a random variable on the uniformly distributed
 space of permutations on $n$ elements, and let $C$ be a real
 number. Suppose that whenever $\sigma,\sigma' \in S_n$ differ
 by a single transposition, $\abs{X(\sigma) - X(\sigma')} \leq C.$ Then,
\[
\P{\abs{X - \E{X}} \geq t} \leq 2\exp\braces{-\frac{2t^2}{C^2n}}.
 \]
\end{fact}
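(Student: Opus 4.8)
The plan is to set up a Doob martingale on the uniform space $S_n$ by exposing the values $\s(1),\s(2),\ldots,\s(n)$ one at a time, and then feed it into a sharp bounded-difference (Azuma--Hoeffding) estimate. For $0\le i\le n$ let $\mathcal{F}_i$ be the $\sigma$-field generated by $\s(1),\dots,\s(i)$ and set $Z_i=\E{X\mid\mathcal{F}_i}$, so that $(Z_i)_{i=0}^n$ is a martingale with $Z_0=\E{X}$ and $Z_n=X$. It then suffices to show that, conditioned on $\mathcal{F}_{i-1}$, the variable $Z_i$ almost surely takes values in an interval of length at most $C$.

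The key step is a coupling. Fix a history $\s(1)=s_1,\dots,\s(i-1)=s_{i-1}$, let $R=[n]\setminus\{s_1,\dots,s_{i-1}\}$, and for $a\in R$ write $\phi(a)=\E{X\mid \mathcal{F}_{i-1},\,\s(i)=a}$. Conditioned on $\mathcal{F}_{i-1}$ and $\s(i)=a$, the permutation $\s$ is a uniformly random completion of the history; composing such a $\s$ with the transposition $\tau=(a\,b)$ acting on \emph{values} produces $\s'=\tau\circ\s$, which satisfies $\s'(j)=s_j$ for $j<i$ (since no $s_j$ equals $a$ or $b$), has $\s'(i)=b$, and is a uniformly random completion of the history with $\s(i)=b$. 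Thus $\s\mapsto\s'$ is a measure-preserving bijection between the two conditional spaces, and $\s,\s'$ differ in exactly the two positions $\s^{-1}(a)$ and $\s^{-1}(b)$, i.e.\ by a single transposition; hence $\abs{X(\s)-X(\s')}\le C$, and averaging over the coupling gives $\abs{\phi(a)-\phi(b)}\le C$ for all $a,b\in R$. Since, given $\mathcal{F}_{i-1}$, the variable $Z_i$ equals $\phi(\s(i))$, it is supported on $\{\phi(a):a\in R\}$, a set of diameter at most $C$; and as $Z_{i-1}=\E{Z_i\mid\mathcal{F}_{i-1}}$ is a convex combination of these values it lies in the same interval. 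In particular $Z_i-Z_{i-1}$, conditioned on $\mathcal{F}_{i-1}$, has mean zero and is supported on an interval of length $\le C$.

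The remainder is the standard moment-generating-function computation. By Hoeffding's lemma, $\E{e^{\lambda(Z_i-Z_{i-1})}\mid\mathcal{F}_{i-1}}\le e^{\lambda^2C^2/8}$ for every real $\lambda$; iterating this through the tower of conditional expectations (peeling off one factor at a time, largest index first) yields $\E{e^{\lambda(Z_n-Z_0)}}\le e^{\lambda^2 nC^2/8}$. Markov's inequality with the optimal choice $\lambda=4t/(nC^2)$ then gives $\P{X-\E{X}\ge t}\le\exp\braces{-2t^2/(nC^2)}$, and running the same argument with $-X$ in place of $X$ and taking a union bound produces the claimed two-sided bound $\P{\abs{X-\E{X}}\ge t}\le 2\exp\braces{-2t^2/(C^2n)}$.

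The only point requiring genuine care is the coupling in the second paragraph: one must verify that changing the exposed value at position $i$ from $a$ to $b$ is implemented by composition with a single transposition and that this composition preserves the uniform conditional measure. Everything else — the martingale set-up, Hoeffding's lemma, and the optimization over $\lambda$ — is routine. (Fact 2 is the special case of McDiarmid's bounded-differences inequality for random permutations; we spell out the proof since it is stated as something to be established.)
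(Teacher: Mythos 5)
Your proof is correct. Note that the paper itself gives no argument for this Fact: it is quoted as a known concentration inequality for random permutations (the Maurey/McDiarmid bounded-differences inequality, used the same way in \cite{FKL}), so there is no internal proof to compare against. Your route --- the Doob exposure martingale $Z_i=\E{X\mid \s(1),\ldots,\s(i)}$, the value-swapping coupling $\s\mapsto (a\,b)\circ\s$ showing each conditional increment lives in an interval of length at most $C$, then Hoeffding's lemma and optimization of $\lambda$ --- is exactly the standard proof of this fact, and all the details check out: the coupling is a measure-preserving bijection between the two conditional fibres, the permutations it pairs do differ by a single transposition (they agree except at the two positions carrying the values $a$ and $b$), and the constant $2t^2/(C^2n)$ comes out of the $\lambda$-optimization precisely as you compute. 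One small remark: since $Z_{n-1}=Z_n$, only $n-1$ increments are nontrivial, so your argument in fact yields the marginally stronger bound with $n-1$ in place of $n$; the stated form follows a fortiori.
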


 \subsection{Properties of $(\e,p)$-regular $k$-graphs}
\begin{lemma}
\label{proplist}
Every $n$-vertex $(\eps, p)$-regular $k$-graph $H$ has the following properties:
\begin{enumerate}[{\bf (L1)}]
 \item Given any sequence of $q$ distinct vertices, $x_1, \ldots, x_q$,
 there are $(1\pm\eps)n^{k-q}p$ sequences of vertices,
 $y_1, \ldots ,y_{k-q}$, such that $\braces{x_1,\ldots,x_q,y_1,\ldots,y_{k-q}}$
 is an edge of $H$.

 {\em In terms of Definition \ref{def1} we have $d=k-q,\,s=1,\,A_1=\{x_1,x_2,\ldots,x_q\}$.
 We multiply by $(k-q)!$ because we apply these properties to ordered sequences of vertices.}
\item Given any sequence of $k-\ell $ distinct vertices, $x_1, \ldots, x_{k-\ell }$, there
are $(1\pm\eps)n^{\ell }p$ sequences of vertices, $y_1, \ldots ,y_{\ell }$,
such that $\braces{x_1,\ldots,x_{k-\ell },y_1,\ldots,y_{\ell }}$ is an edge of $H$.

{\em In terms of Definition \ref{def1} we have $d=\ell,\, s=1,\, 
A_1 = \braces{x_1,\ldots,x_{k-\ell}}$.}
\item Given any sequence of $2q$ distinct vertices $x_1,\ldots,x_q,y_1,\ldots,y_q$,
there are $(1\pm\eps)n^{k-q}p^2$  sequences of vertices
$z_1,\ldots,z_{k-q}$ vertices such that
$\braces{x_1,\ldots,x_q,z_1,\ldots,z_{k-q}}$ and\\
$\braces{y_1,\ldots,y_q,z_1,\ldots,z_{k-q}}$ are both
edges of $H$.

{\em In terms of Definition \ref{def1} we have $d=k-q,\, s=2,\, 
A_1 = \braces{x_1,\ldots,x_{q}},\, A_2 = \braces{y_1\ldots,y_q}$.}
\item Given any sequence of $2(k-\ell )$ vertices $x_1,\ldots,x_{k-\ell },y_1,\ldots,y_{k-\ell }$
(where we demand only that $x_1 \neq y_1$), there are
$(1\pm\eps)n^{\ell }p^2$  sequences of vertices
$z_1,\ldots,z_{\ell }$ vertices such that
$\braces{x_1,\ldots,x_{k-\ell },z_1,\ldots,z_{\ell }}$ and
$\braces{y_1,\ldots,y_{k-\ell },z_1,\ldots,z_{\ell }}$ are both edges of $H$.

{\em In terms of Definition \ref{def1} we have $d=\ell,\, s=2,\, 
A_1 = \braces{x_1,\ldots,x_{k-\ell}},\, A_2 = \braces{y_1\ldots,y_{k-\ell}}$.
Note that if $\ell\mid k$,
this is identical to property (L3) since in this case, $q=k-l$.}
\item Given any sequence of $\ell +(k-2\ell )+q$ vertices
$x_1,\ldots,x_\ell , a_1,\ldots,a_{k-2\ell }, z_1,\ldots,z_q$,
there are $(1\pm\e)n^\ell p^{z+ 1}$  sequences
of vertices $b_1,\ldots,b_\ell $ such that
all of the following edges are present in $H$:
\[\braces{x_1,\ldots,x_\ell ,a_1,\ldots, a_{k-2\ell },b_1,\ldots,b_\ell }\]
and
\[
 \braces{a_{i\ell +1},\ldots,a_{k-2\ell },b_1,\ldots, b_\ell ,z_1,\ldots,z_{(i+1)\ell }}
\]
for all $i=0,\ldots, \qol -1.$

{\em In terms of Definition \ref{def1} we have $d=\ell, \, 
s=\qol+1$ and the sets $A_1,\ldots,A_{\qol+1}$ are the edges 
listed minus the set $\braces{b_1,\ldots,b_l}$.}
\item Suppose  $\ell\nmid k$.  Given any sequence of $k-\ell  + q$ distinct vertices
$a_1,\ldots, a_{k-\ell },z_1,\ldots, z_q,$
there are\\
 $(1\pm\eps)n^{q-k+\ell }p^{z}$  sequences of vertices
$b_1,\ldots, b_{q-k+\ell }$ such that all of the following edges are present in $H$:
\begin{align*}
 \braces{a_{i\ell +1},\ldots,a_{k-\ell },b_1,\ldots,b_{q-k+\ell },z_1,\ldots,z_{k-q + i\ell }},
\end{align*}
for all $i = 0,\ldots,\qol -1.$

{\em In terms of Definition \ref{def1} we have $d=q-k+\ell, \, s = \qol$, 
and the sets $A_1,\ldots, A_\qol$ are the edges listed minus the 
set $\{b_1,\ldots,b_{q-k+\ell}\}$. We require that $\ell\nmid k$ since otherwise $q-k+\ell = 0$ }

\item
Given any sequence of $2\ell + (k-2\ell ) + 2q$ distinct vertices,
\[x_1,\ldots,x_\ell , y_1,\ldots,y_\ell ,a_1,\ldots,
a_{k-2\ell }, z_1,\ldots, z_q, w_1,\ldots, w_q,\]
there are $(1\pm\eps)n^\ell p^{2\qol  + 2}$
 sequences of vertices $b_1,\ldots,b_\ell $ such
that all of the following edges are present in $H$:

\[ \braces{x_1,\ldots,x_\ell ,a_1,\ldots, a_{k-2\ell },b_1,\ldots,b_\ell },
\braces{y_1,\ldots,y_\ell ,a_1,\ldots, a_{k-2\ell }, b_1,\ldots,b_\ell }\]
and
\[\braces{a_{i\ell +1},\ldots,a_{k-2\ell },b_1,\ldots, b_\ell ,z_1,\ldots,z_{(i+1)\ell }},
\braces{a_{i\ell +1},\ldots,a_{k-2\ell },b_1,\ldots, b_\ell ,w_1,\ldots,w_{(i+1)\ell }}\]
for all $i=0,\ldots, \qol -1.$

{\em In terms of Definition \ref{def1} we have $d=\ell, \, s=2\qol+2$ 
and the sets $A_1,\ldots,A_{2\qol+2}$ are the edges listed minus the set $\{b_1,\ldots,b_l\}$.}
\item Suppose $\ell\nmid k.$ Given any sequence of $k-\ell  + 2q$ distinct vertices
$a_1,\ldots, a_{k-\ell },z_1,\ldots, z_q, w_1,\ldots, w_q,$ there
are
 $(1\pm\eps)n^{q-k+\ell }p^{2\qol }$ sequences of
vertices $b_1,\ldots, b_{q-k+\ell }$ such that all of the following edges are present in $H$:
\begin{align*}
 \braces{a_{i\ell +1},\ldots,a_{k-\ell },b_1,\ldots,b_{q-k+\ell },z_1,
 \ldots,z_{k-q + i\ell }}, \\
 \braces{a_{i\ell +1},\ldots,a_{k-\ell },b_1,\ldots,b_{q-k+\ell },w_1,\ldots,w_{k-q + i\ell }}
\end{align*}
for all $i = 0,\ldots,\qol -1.$

{\em In terms of Definition \ref{def1} we have $d = q-k+\ell, \, s=2\qol,$ 
and the sets $A_1,\ldots,A_{2\qol}$ are the sets listed minus the 
set $\{b_1,\ldots,b_{q-k+l}\}$.}
\end{enumerate}

\end{lemma}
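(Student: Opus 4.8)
The plan is to derive each of (L1)--(L9) as a direct instantiation of Definition \ref{def1}: for each item one reads off from the accompanying italicised remark the integers $d$ and $s$ and the $(k-d)$-sets $A_1,\dots,A_s$, applies Definition \ref{def1} to count the $d$-element \emph{sets} $D$ for which $A_1\cup D,\dots,A_s\cup D$ are all edges of $H$, and then multiplies the resulting count $(1\pm\eps)\frac{n^d}{d!}p^s$ by $d!$ to pass from unordered $d$-sets to the ordered $d$-tuples (the $y$'s, $z$'s, or $b$'s) appearing in the statements. Thus the work is entirely a verification, carried out separately for each of the nine items, that the proposed data satisfies the hypotheses of Definition \ref{def1}: (i) $d\in\{1,\dots,\ell\}$ and $s\in\{1,\dots,2z+2\}$; (ii) each $A_j$ has exactly $k-d$ vertices; (iii) the $A_j$ are pairwise distinct; (iv) $\bigl|\bigcup_j A_j\bigr|\le k+2q$. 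There is no probabilistic content and no use of $(\eps,p)$-regularity beyond Definition \ref{def1} itself.

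For (L1)--(L4) these checks are almost immediate. In (L1)--(L3) the $A_j$ are the prescribed $q$-tuples (so $k-d=q$ and $d=k-q$, which lies in $\{1,\dots,\ell\}$ because $k-\ell\le q<k$) or the prescribed $(k-\ell)$-tuple; distinctness in (L3) is inherited from the hypothesis that the $2q$ input vertices are distinct, and the union has size at most $2q\le k+2q$. For (L4) we take $d=\ell$, $s=2$, $A_1=\{x_1,\dots,x_{k-\ell}\}$, $A_2=\{y_1,\dots,y_{k-\ell}\}$; here one must check $A_1\ne A_2$ from the stated hypotheses on the $x$'s and $y$'s, and note $\bigl|A_1\cup A_2\bigr|\le 2(k-\ell)<k+2q$ since $q>k/2$.

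The substantive bookkeeping is in the ``shifted window'' items (L5), (L6), (L7)--(L9). There the sets $A_j$ are the listed edges with the coordinates to be counted (the $\ell$ vertices $b_1,\dots,b_\ell$, or the $q-k+\ell$ vertices $b_1,\dots,b_{q-k+\ell}$) deleted, and I would verify by a short direct count that each listed set is a genuine $k$-edge, hence that each $A_j$ has exactly $k-d$ vertices: in the window indexed by $i$, the block $a_{i\ell+1},\dots$ together with the $b$-block, the relevant $z$-block (or $w$-block), and, in the first edge of (L5),(L7),(L8), the $x$-block, contains exactly $k$ vertices, so deleting the $b$-block leaves $k-\ell$ vertices (respectively $k-(q-k+\ell)=2k-\ell-q$ vertices), using the identity $q=\ell z$ and the constraints $k/2<k-\ell\le q<k$; in (L6),(L9) the assumption $\ell\nmid k$ is precisely what guarantees $d=q-k+\ell\ge 1$. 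Pairwise distinctness of the $A_j$ follows from the assumed distinctness of all input vertices by comparing the smallest-index $a$-, $z$-, $w$-, $x$-, or $y$-coordinate occurring in any two of them, and $\bigcup_j A_j$ is contained in the set of all non-$b$ input vertices, of size $k-\ell+q$ (or $k-\ell+2q$, or $k+2q$), which is $\le k+2q$; the parameter $s$ equals $z+1$, $z$, $2z+2$, or $2z$, all at most $2z+2$. I expect the fussiest point to be exactly this cardinality accounting for the $z$ windowed edges in (L5), (L7), (L8) --- confirming that each window really has size $k$ and that the resulting sets $A_j$ are genuinely distinct --- but this is routine given the relations among $k,\ell,z,q$.
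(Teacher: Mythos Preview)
Your proposal is correct and is exactly the paper's approach: the italicised remarks embedded in the statement \emph{are} the paper's proof, and you have simply (and correctly) spelled out the routine verifications they presuppose. Two minor slips: there is no (L9) --- the list runs (L1)--(L8), so ``(L6),(L9)'' should read ``(L6),(L8)''; and in (L4) the hypothesis ``$x_1\ne y_1$'' does not by itself force the unordered sets $A_1,A_2$ to differ, so strictly one needs the slightly stronger distinctness that actually holds in every place (L4) is invoked in the paper --- but this is a wrinkle in the statement, not in your argument.
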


\section{Proof of Lemma \ref{everyDi'Uniform}(a)}\label{parta}
We will follow the convention that a factor $1+o(1)$ will be absorbed into the $1\pm\e$ factors
when the $o(1)$ term is clearly small enough. This will simplify several expressions.
\begin{lemma}
\label{qTuplesInD}
 Let $S$ be a set of ordered $q$-tuples of distinct vertices with $\eps^2\abs{S}^2/n^{2q-1} \gg
 \log n$. Let $\s$ be a random permutation of $[n]$.
 Let $N=\abs{S\cap V(D_\s)}$. Then
$N = (1 \pm \eps)\frac{\abs{S}}{qn^{q-1}}$ \qs
\footnote{A sequence of events $\cE_n,n\geq 0$ is said to occur
{\em quite surely} (\qs) if $\Pr(\cE_n)=1-O(n^{-K})$ for any positive constant $K$}.
\end{lemma}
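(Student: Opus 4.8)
The plan is a straightforward first-moment computation followed by the permutation concentration inequality (Fact~2). Observe first that $V(D_\s)$ is precisely the collection of the $\n_q=n/q$ consecutive length-$q$ blocks of the permutation $\s=(v_1,\dots,v_n)$, so $N$ counts how many ordered $q$-tuples of $S$ occur among these blocks. For a fixed ordered $q$-tuple $t$ of distinct vertices and a fixed index $i\in[\n_q]$, the probability that $t$ is exactly the $i$th block of $\s$ equals $1/(n)_q$, where $(n)_q=n(n-1)\cdots(n-q+1)$; summing over $i$ gives $\P{t\in V(D_\s)}=(n/q)/(n)_q$. By linearity of expectation,
\[
\E{N}=\frac{\abs{S}\,n}{q\,(n)_q}=(1+o(1))\,\frac{\abs{S}}{q\,n^{q-1}},
\]
using $(n)_q=n^q(1+O(1/n))$ and that $q$ is a constant, so the $o(1)$ here is in fact $O(1/n)$.

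For concentration I would bound the effect of a single transposition on $N$. Swapping two coordinates of $\s$ changes at most two of the length-$q$ blocks, and each changed block alters the count $N$ by at most $1$, so $\abs{N(\s)-N(\s')}\le 2$ whenever $\s,\s'$ differ by a transposition. Applying Fact~2 with $C=2$ and $t=\tfrac{\eps}{3}\cdot\frac{\abs{S}}{q n^{q-1}}$ yields
\[
\P{\,\abs{N-\E{N}}\ge t\,}\le 2\exp\braces{-\frac{t^2}{2n}}=2\exp\braces{-\Omega\of{\frac{\eps^2\abs{S}^2}{n^{2q-1}}}},
\]
and since $\eps^2\abs{S}^2/n^{2q-1}\gg\log n$ by hypothesis, this probability is $O(n^{-K})$ for every constant $K$. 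Hence \qs\ we have $\abs{N-\E{N}}\le\tfrac{\eps}{3}\cdot\frac{\abs{S}}{q n^{q-1}}$, which together with the expectation estimate gives $N=(1\pm\eps)\frac{\abs{S}}{q n^{q-1}}$ \qs, after absorbing the $o(1)$ into the $1\pm\eps$ factor as is the convention of the paper.

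The argument is essentially routine; the one point to be careful about — and the only place I would slow down — is the Lipschitz constant. One must check that the fact that $S$ consists of \emph{ordered} tuples, and may contain tuples sharing vertices, does not push the constant past $2$. It does not: for a fixed permutation the blocks are disjoint as sets of coordinates, and a transposition touches coordinates in at most two of them, so at most two indicator terms in $N$ can change, each by at most $1$. Everything else is bookkeeping with the $O(1/n)$ error in $\E{N}$, which is dominated by $\eps/3$ once $n$ is large.
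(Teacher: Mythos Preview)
Your proof is correct and follows essentially the same approach as the paper: compute $\E{N}$ via $\Pr(\v\in V(D_\s))=(1+O(1/n))/(qn^{q-1})$, observe that a transposition alters at most two blocks and hence changes $N$ by at most $2$, and apply Fact~2. The only cosmetic difference is that the paper uses $\eps/2$ rather than $\eps/3$ for the deviation and writes the single-tuple probability as $\frac{1}{q}\cdot\frac{1}{n-1}\cdots\frac{1}{n-q+1}$ directly.
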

\begin{proof}
 If $\v=(v_1,\ldots,v_q)$ then
\[\Pr(\v\in V(D_\s))=\frac{1}{q}\cdot\frac{1}{n-1}\cdot\frac{1}{n-2}\cdots\frac{1}{n-q+1}
=\parens{1 \pm \frac{q^2}{2n}}\frac{1}{qn^{q-1}},\]
So
$$\E{N} = \parens{1 \pm \frac{q^2}{2n}}\frac{|S|}{qn^{q-1}}.$$

Suppose the permutation $\sigma$ is converted
to $\sigma'$ by a single transposition. Then this
changes at most 2 of the vertices of $D_\s$. So $N$ can change by at most 2.
Then Fact 2 implies that the probability that $N$ deviates from its mean by more
than $\frac{\eps}{2}\frac{\abs{S}}{qn^{q-1}}$  is at most
\[
 2\exp\braces{-\frac{2\of{\frac{\eps}{2}\frac{\abs{S}}{qn^{q-1}}}^2}{2^2n}} = O(n^{-K})
\]
for any positive constant $K$. The lemma follows since $q^2/n=O(1/n)\ll \e$.

\end{proof}
\begin{lemma}
\label{everyDiUniform}
 Suppose $n,p,$ and $\eps$ satisfy $\eps^2np^{8\qol } \gg \log n.$ Let $H$ be an
 $(\eps,p)$-regular $k$-graph on $n$ vertices ($n$ divisible by $q$).
 Let $\s$ be a random permutation of $[n]$.
Then $D=D_\s$ is $((2\qol +5)\eps,p^{\qol })$-regular, \qs.
\end{lemma}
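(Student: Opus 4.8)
The plan is to verify the three defining properties of an $(\e', p^z)$-regular digraph for $D = D_\sigma$, with $\e' = (2z+5)\e$, by combining the combinatorial count of ``how many $q$-tuples of each relevant type are owned-arc-compatible with a fixed $q$-tuple'' (which is governed by the properties (L1)--(L9) of Lemma \ref{proplist}) with the sampling statement of Lemma \ref{qTuplesInD} (which tells us what fraction of those $q$-tuples actually land as vertices of $D_\sigma$). For each property, fix the relevant bounded set of ``query'' vertices (a $q$-tuple $\v$ for out/in-degree; two $q$-tuples for the pair conditions; up to four for property (iii)); let $S$ be the set of ordered $q$-tuples $\v'$ of distinct vertices, disjoint from the query vertices, such that $\v'$ together with the query data forms all the required owned edges. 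The crucial point is that whether $\v$ precedes $\v'$ depends only on the $2q$ vertices involved and the value of $\sigma$ on them (which of the two blocks each vertex sits in), so once $\v,\v'$ are both vertices of $D_\sigma$ in the correct relative block order the arc is present iff the $z$ edges $\ej_0,\dots,\ej_{z-1}$ are in $H$ — a purely hypergraph condition. Thus $|S|$ is computed by the appropriate property from Lemma \ref{proplist}, and then Lemma \ref{qTuplesInD} converts $|S|$ into $|S \cap V(D_\sigma)|$, up to a further $(1\pm\e)$ factor and up to the combinatorial factor $1/(qn^{q-1})$ and the $(k-q)!$ (or $\ell!$) normalizations that relate ordered sequences to the unordered counts in Definition \ref{def1}.

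Concretely: for out-degree $d^+(\v) = (1\pm\e)\nu p^z$ I would take $S$ = set of $q$-tuples $\v'$ with $v_1\ne$ (first vertex of $\v$) etc. such that $\v$ precedes $\v'$; counting the sequences $\v'$ that complete the $z$ shifted $k$-edges is exactly an iterated application of the kind of count in (L1)/(L2) (extend by $\ell$ or $k-q$ vertices at a time, $z$ times, picking up one factor of $p$ each time), giving $|S| = (1\pm z\e)n^q p^z$, so $\e^2|S|^2/n^{2q-1} = (1\pm o(1))np^{2z} \gg \log n$ and Lemma \ref{qTuplesInD} gives $|S\cap V(D_\sigma)| = (1\pm\e)|S|/(qn^{q-1}) = (1\pm(z+2)\e)(n/q)p^z = (1\pm(z+2)\e)\nu p^z$. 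In-degree is symmetric. For the pair quantities $d^+(a,b), d^-(a,b), d^{+-}(a,b)$ one uses (L3)/(L4) (or (L5)--(L9) depending on whether $\ell\mid k$) to count $q$-tuples simultaneously compatible with two query $q$-tuples: here $|S| = (1\pm 2z\e)n^q p^{2z}$, the ratio condition of Lemma \ref{qTuplesInD} becomes $\e^2 np^{4z}\gg\log n$ (implied by the hypothesis $\e^2 np^{8z}\gg\log n$), and we get $(1\pm(2z+3)\e)\nu p^{2z}$. Property (iii), with up to four query tuples and a $p^{4z}$ count, needs $\e^2 np^{8z}\gg\log n$ — this is exactly why the hypothesis carries $p^{8z}$ — and yields $(1\pm(4z+5)\e)\nu p^{4z}$; so taking $\e' = (2z+5)\e$ as a clean common bound that dominates $(z+2)\e$ and $(2z+3)\e$, and noting $(4z+5)\e \le (2z+5)\e$ is \emph{not} true in general — so more care is needed there; one replaces $\e'$ by the honest max, but the paper's constant $(2z+5)$ presumably already accounts for how the $p^{4z}$-count is built from two $p^{2z}$-type counts sharing structure, bringing the multiplier down. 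I would track the exact multiplier through the iterated extension rather than through naive multiplication.

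The main obstacle is bookkeeping the iterated extension argument that turns the one-edge counts (L1)/(L2) into the $z$-edge count $|S| = (1\pm O(z\e))n^q p^z$ while keeping the accumulated error linear in $z$ (not $(1+\e)^z$), and — more subtly — checking that at each extension step the set of already-placed vertices plus the vertices being counted stays within the size bound $|\bigcup_i A_i| \le k+2q$ demanded by Definition \ref{def1}, so that the $(\e,p)$-regularity hypothesis is actually applicable; this is where one must be careful that the query data for property (iii) (four $q$-tuples, but with $b=c$ allowed) together with the completing $q$-tuple does not exceed $k+2q$ — which is why Definition \ref{def1} allows exactly $s$ up to $2z+2$ and union size up to $k+2q$. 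Once the counts are in hand, the conversion via Lemma \ref{qTuplesInD} is routine, and the ratio hypotheses are all implied by $\e^2 np^{8z}\gg\log n$.
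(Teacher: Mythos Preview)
Your overall strategy is exactly the paper's: for each regularity condition, compute the size of the relevant set $S$ of $q$-tuples using the list (L1)--(L8) of Lemma~\ref{proplist}, then apply Lemma~\ref{qTuplesInD} to pass from $|S|$ to $|S\cap V(D_\sigma)|$. Your treatment of out/in-degrees and of $d^+,d^-$ is essentially what the paper does (the paper uses (L1) once and then (L2) repeatedly for out-degree, and (L3) followed by (L4) repeatedly for $d^+$). There is no property (L9), incidentally.

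The gap is the one you yourself flag: your naive count for property (iii) gives a $(1\pm(4z+5)\e)$ factor, which does not fit under $(2z+5)\e$, and you leave it at ``more care is needed''. The mechanism that brings the multiplier down is \emph{not} that two $p^{2z}$-type counts share structure and cancel error, but rather that Definition~\ref{def1} allows a single application with $s$ as large as $2z+2$. Concretely, the paper builds the sought $q$-tuple $\mathbf{a}=(a_1,\ldots,a_q)$ in at most $z+1$ extension steps, \emph{each step a single invocation of Definition~\ref{def1}}: first $z-1$ applications of (L4) (or (L3) once and then (L4)) to handle the two ``in'' constraints from $\mathbf{x},\mathbf{y}$ as $\mathbf{a}$ is grown, and then one final application of (L7) (or (L8) when $\ell\nmid k$) that simultaneously checks all $2z+2$ (respectively $2z$) remaining edges --- the two trailing ``in'' edges together with all $2z$ ``out'' edges into $\mathbf{z},\mathbf{w}$. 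That final step contributes a single $(1\pm\e)$ factor, not $2z$ of them. Thus the total number of $(1\pm\e)$ factors is $z$ (or $z+1$), and with the paper's convention $(1\pm a\e)(1\pm b\e)\subseteq(1\pm(a+b+1)\e)$ this stays within $(1\pm(2z+3)\e)$ before Lemma~\ref{qTuplesInD}, hence $(1\pm(2z+5)\e)$ after. This is also precisely why Definition~\ref{def1} is stated with $s$ up to $2z+2$ and $|\bigcup A_i|\le k+2q$: property (L7) saturates both bounds. Your ``iterated extension adding one edge at a time'' would need $4z$ invocations and cannot reach the stated constant; you must batch the out-going constraints into one call.
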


\begin{proof}
 We verify the properties of $D$ one at a time, starting with out-degrees.
 Fix any $q$ vertices, $v_1,\ldots,v_q$. Let $\v = (v_1,\ldots,v_q)$. Let
 $N_\v$ be the number of $q$-tuples $\mathbf{w}$
 such that (a) $\w\in V(D)$ and (b) $\v$ precedes $\mathbf{w}$. It suffices to show that
 with probability $1-o(n^{-(q+1)})$, $N_\v = \pme{(2\qol  + 5)}p^z\n_q$.
 Let $S_\v$ be the set of $q$-tuples $\mathbf{w}$,
 such that $\v$ precedes $\mathbf{w}$.

Apply property (L1) of Lemma \ref{proplist} to $\braces{v_1,\ldots v_q}$ and fix
one of the $\pme{}n^{k-q}p$ sequences $(v_{q+1}, \ldots ,v_{k})$ such that $\{v_1,\ldots,v_k\}\in H$.
Let $\mathbf{u} = (v_1,\ldots,v_k)$
and do the following $\qol -1$ times:
\begin{enumerate}
 \item Apply property (L2) of Lemma \ref{proplist} to the trailing $k-\ell $
 elements of $\mathbf{u}$.
 \item Fix one of the $\pme{}n^\ell p$ sequences of $\ell $ vertices.
 \item Append this sequence of $\ell $ vertices to the end of $\mathbf{u}$.
\end{enumerate}

At the end of this process, $k-q+(z-1)\ell =k-\ell $
distinct vertices, $(v_{q+1},\ldots, v_{q+k-\ell })$,
have been fixed and appear at the trailing end of $\mathbf{u}$.
Fix any $q-k+\ell\geq 0$ distinct vertices to give the $q$ tuple
$\mathbf{w} = (v_{q+1},\ldots,v_{2q})$.

Combining our estimates from each step tells us that
\begin{align*}
 \abs{S_\v}&=\pme{}n^{k-q}p \cdot\of{\pme{}n^\ell p}^{\qol -1}\cdot n^{q-k+\ell}\\
 &=\pme{(2\qol +3)}n^qp^{\qol }\\
\noalign{ and so}
\E{N_\v}&=\frac{\E{|S_\v|}}{q(n-1)\cdots(n-q+1)}=\frac{\pme{(2\qol +4)}n^qp^{\qol }}{qn^{q-1}}
=\pme{(2\qol +4)}p^z\n_q.
\end{align*}
Since $\eps^2p^{\qol}n \gg \log n,$ we can apply Lemma
\ref{qTuplesInD} to $S_\v$ to conclude that \qs
\[
 N_\v = \pme{(2\qol  + 5)}p^{\qol }\n_q.
\]
For in-degrees, fix a $q$-tuple \[\mathbf{u} = \v = (v_{q+1},\ldots,v_{2q}).\]
do the following $\qol $ times:
\begin{enumerate}
 \item Apply property (L2) of Lemma \ref{proplist}
 to the leading $k-\ell $ elements of $\mathbf{u}$.
 \item Fix one of the $\pme{}n^\ell p$ sequences of $\ell $ vertices.
 \item Prepend this sequence to the beginning of $\mathbf{u}$.
\end{enumerate}
At the end of this process, $q$ vertices have been fixed and appear in the
first $q$ positions of $\mathbf{u}$. Call this $q$-tuple $\mathbf{w}$.
Combining estimates from each step of the process tells us that the number
of such $\mathbf{w}$ that precede $\v$ is
\[
 \of{\pme{}n^\ell p}^{\qol } = \pme{(2\qol +1)}n^qp^{\qol }.
\]
Applying Lemma \ref{qTuplesInD} as before gives us that \qs\ the in-degree of $\v$ in $D$ is
\[\pme{(2\qol +3)}p^{\qol }\n_q.
\]
The remaining properties are dealt with in a similar manner. For each, we will state
what properties from Lemma \ref{proplist} to apply and compute the number of
satisfying $q$-tuples. In all cases, an application of Lemma \ref{qTuplesInD} completes the argument.

For $d^+(\mbf{x},\mbf{y})$ in $D$, fix 2 $q$-tuples of distinct
vertices, $\mbf{x} = (x_1,\ldots,x_q)$
and $\mbf{y} = (y_1,\ldots,y_q)$ and apply property (L3)
to obtain $(z_1,z_2,\ldots,z_{k-q})$ in
$(1\pm\e)n^{k-q}p^2$ ways. Follow by $\qol -1$ applications
of property (L4).
Our first iteration applies (L4) to $x_{\ell+1},\ldots,x_q,z_1,\ldots,z_{k-q}$
and $y_{\ell+1},\ldots,y_q,z_1,\ldots,z_{k-q}$ to obtain $(z_{k-q+1},\ldots,z_{k-q+\ell})$
in $(1\pm\e)n^{k-q}p^2$ ways. We then shift right $\ell$ terms
along both sequences and apply (L4) again. In our last application we feed sequences that begin
with $x_{(z-1)\ell+1}\neq y_{(z-1)\ell+1}$ using the fact that $(z-1)\ell+1<q+1$.
Arbitrarily choose
$q-k+\ell\geq0$ more vertices to fill out $z_1,\ldots,z_q$. The estimate in this case is
\[
 (1\pm\eps)n^{k-q}p^2\cdot\of{(1\pm\eps)n^{\ell }p^2}^{\qol -1}\cdot(n-(k-\ell ))\cdots(n-(q-1))
\]
Simplifying and applying Lemma \ref{qTuplesInD} gives that $d^+(\mbf{x},\mbf{y})$ in $D$ is \qs
\[
 \pme{(2\qol  + 5)}p^{2\qol }\n_q.
\]
Similarly $d^-(\mbf{x},\mbf{y})$ is \qs
\[\pme{(2\qol  + 5)}p^{2\qol }\n_q.\]
For $d^{+-}(\mbf{x},\mbf{y})$ in $D$, fix $2q$ distinct vertices
arranged in 2 $q$-tuples, $\mbf{x} = (x_1,\ldots,x_q)$ and $\mbf{y} = (y_1,\ldots,y_q).$
If $\ell $ divides $k$, (so that $q=k-\ell$),
apply property (L2) $\qol -1$ times starting with $\mbf{x}$. After the 
first iteration, we obtain  $(z_1,\ldots,z_\ell)$ in
$\pme{}n^{l}p$ ways. We shift right by $\ell$
in the sequence for each subsequent application of property (L2) to 
obtain $(z_1,\ldots,z_{q-\ell})$. Note here that $q-\ell = k-2\ell$.
Property (L5) is then applied to $x_{q-\ell+1},\ldots,x_{q},z_1,\ldots,z_{q-\ell},y_1,\ldots,y_q$.
The estimate in this case is
\[
 \of{\pme{}n^\ell p}^{z-1}\cdot \pme{}n^\ell p^{z+1}
\]
If $\ell $ does not divide $k$, then apply (L1) to $\mbf{x}$ to 
obtain $(z_1,\ldots,z_{k-q})$ in $\pme{}n^{k-q}p$ ways.
Follow this by $\qol -1$ applications of
(L2), shifting right by $\ell$ in the sequence for each application to 
obtain $(z_1,\ldots,z_{k-\ell})$.  Follow by an
application of (L6) to $z_1,\ldots,z_{k-\ell},y_1,\ldots,y_q$ to fill 
out $(z_1,\ldots,z_q)$. The estimate in this case is
\[
 \pme{}n^{k-q}p\cdot\of{\pme{}n^\ell p}^{z-1}\cdot\pme{}n^{q-k+\ell}p^z
\]
Simplifying and applying Lemma \ref{qTuplesInD} in both cases gives 
that $d^{+-}(\mbf{x},\mbf{y})$ is \qs
\[
 \pme{(2\qol +5)}p^{2\qol }\n_q
\]

For the third property of digraph uniformity, fix $4q$ distinct vertices arranged
in 4 $q$-tuples, $\mbf{x} = (x_1,\ldots,x_q), \,\mbf{y} = (y_1,\ldots,y_q), \, 
\mbf{z} = (z_1,\ldots,z_q),$ and $\mbf{w} = (w_1,\ldots,w_q)$. If $\ell $ divides 
$k$, do $\qol -1$ applications
of property (L4).
Our first iteration applies (L4) to $x_{1},\ldots,x_q,y_1,\ldots,y_q$
 to obtain $(a_{1},\ldots,a_{\ell})$
in $(1\pm\e)n^{\ell}p^2$ ways. We then shift right $\ell$ terms
along both sequences and apply (L4) to  $x_{\ell+1},\ldots,x_q,a_1,\ldots,a_{\ell}$
and $y_{\ell+1},\ldots,y_q,a_1,\ldots,a_{\ell}$ and so on until we have 
obtained $(a_1,\ldots,a_{k-2\ell})$. We then apply property (L7) to
\[
 x_{q-\ell+1},\ldots,x_q,y_{q-\ell+1},\ldots,y_q,a_1,\ldots,a_{k-2\ell},z_1,\ldots,z_q,w_1,\ldots,w_q
\]
to find $(a_{k-2\ell +1}, \ldots, a_q)$. The estimate in this case is
\[
  \of{\pme{}n^\ell p^2}^{\qol-1}\cdot \pme{}n^\ell p^{2\qol+2}.
\]
If $\ell $ does not divide $k$, apply property (L3) to $x_1,\ldots,x_q,y_1,\ldots,y_q$ 
to obtain $(a_1,\ldots,a_{k-q})$ in $\pme{}n^{k-q}p^2$.
Follow by $\qol -1$ applications of (L4) as in the proof of $d^+(\mbf{x},\mbf{y})$ 
to obtain $(a_1,a_2,\ldots,a_{k-\ell})$.
Then apply (L8) to $a_1,\ldots,a_{k-\ell},z_1,\dots,z_q,w_1,\ldots,w_q$ in order 
to find $(a_{k-\ell+1},\ldots,a_q)$. The estimate in this case is
\[
 \pme{}n^{k-q}p^2\cdot\of{\pme{}n^\ell p^2}^{\qol-1}\cdot\pme{}n^{q-k+\ell}p^{2\qol}.
\]
Simplifying and applying Lemma \ref{qTuplesInD} in both cases gives \qs
\[
   \pme{(2\qol +5)}p^{4\qol}\n_q
\]
for property (iii) of digraph uniformity.
\end{proof}

\begin{lemma}
\label{everyEdgeCovered}
 Suppose $n,p$, and $\eps$ satisfy $\eps n \gg 1$. Let $H$ be an $(\eps,p)$-regular
 $k$-graph on $n$ vertices ($n$ divisible by $q$), and randomly and independently
 construct digraphs $D_1,\ldots,D_r$ according to Procedure 1. Let $H_i$ be their corresponding
$k$-graphs. Then with probability $1-o(n^{-1})$, every edge of $H$ is an edge in
$\pme{(\qol +2)}\kappa$ of the $H_i$. Here $\kappa,r$ are as defined in \eqref{kappa}.
\end{lemma}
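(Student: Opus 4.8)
Fix an edge $e$ of $H$ and put $N_e=\abs{\braces{i:e\in H_i}}$. Because $\s_1,\dots,\s_r$ are generated independently with the same distribution, the events $\braces{e\in H_i}$ $(i=1,\dots,r)$ are i.i.d., so $N_e\sim\bin{r,p_e}$ with $p_e:=\pr{e\in H_{\s_1}}$. The whole proof reduces to showing
\[
p_e=\pme{z}\,\frac{k!\,p^{z-1}}{\ell q\, n^{k-2}},
\]
for then \eqref{kappa}, which gives $r=\frac{\ell q\,n^{k-2}}{k!\,p^{z-1}}\kappa$, yields $\E{N_e}=rp_e=\pme{z}\kappa$. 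Fact~1 (with $c_\eps=\eps^2/3$) then gives $\pr{\abs{N_e-rp_e}>\eps\,rp_e}<e^{-\eps^2 rp_e/3}\le n^{-\frac32(k+1)}$ once $\eps$ is small, using $\kappa=6(k+1)\eps^{-2}\log n$; and off this event $N_e=\pme{}rp_e=\pme{(z+2)}\kappa$. A union bound over the at most $\binom nk\le n^k$ edges of $H$ then completes the proof, since $n^k\cdot n^{-\frac32(k+1)}=o(n^{-1})$.

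The core task is therefore the estimate for $p_e$; write $e=\braces{u_1,\dots,u_k}$. By definition $e\in H_{\s_1}$ exactly when some arc $(\v_a,\v_b)$ of $D_{\s_1}$ owns $e$, i.e.\ $e=\ej_m(\v_a,\v_b)$ for some $m\in\braces{0,\dots,z-1}$ while all of $\ej_0,\dots,\ej_{z-1}$ of that pair are edges of $H$. In a fixed permutation the $2q$ vertices of $\v_a\v_b$ are distinct, so within one pair the $\ej_i$ are pairwise distinct as sets, and declaring $e=\ej_m(\v_a,\v_b)$ pins down the $\s_1$-positions of $u_1,\dots,u_k$ completely. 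Consequently distinct choices of the ordered pair $(a,b)$, of the offset $m$, and of the bijection $\pi$ placing $u_1,\dots,u_k$ into the window slots $m\ell+1,\dots,m\ell+k$ force $e$ onto distinct position sets, so the events $A_{a,b,m,\pi}:=\braces{e=\ej_m(\v_a,\v_b)\text{ via }\pi\text{ and }\v_a\text{ precedes }\v_b}$ are pairwise disjoint and $p_e=\sum_{a\ne b}\sum_{m}\sum_{\pi}\pr{A_{a,b,m,\pi}}$. Since $0\le m\ell\le q-\ell$ we have $m\ell+1\le q<k\le m\ell+k$ and $m\ell+k\le q-\ell+k\le 2q$ (the last step being $k\le q+\ell$, which holds as $q\ge k-\ell$), so $e$ always straddles the $\v_a/\v_b$ boundary: it occupies the last $q-m\ell$ slots of $\v_a$ and the first $m\ell+k-q$ slots of $\v_b$, leaving $m\ell$ free slots at the front of $\v_a$ and $2q-k-m\ell$ free slots at the back of $\v_b$, hence $2q-k$ free slots in all.

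For a fixed $(a,b,m,\pi)$ I factor $\pr{A_{a,b,m,\pi}}$ as the probability that $e$ occupies exactly those slots in that order, namely $\frac1{n(n-1)\cdots(n-k+1)}=\pme{}n^{-k}$, times the conditional probability that the free slots of $\v_a$ and $\v_b$ are filled so that $\v_a$ precedes $\v_b$. For the latter I count the fills (by vertices outside $e$) making all of $\ej_0,\dots,\ej_{z-1}$ present---$\ej_m=e$ being automatic---by growing the window outward $\ell$ vertices at a time: each fresh block of $\ell$ vertices, whether prepended (to realise $\ej_{m-1},\ej_{m-2},\dots$) or appended (to realise $\ej_{m+1},\dots,\ej_{z-1}$), is counted by property (L2) of Lemma~\ref{proplist} and contributes a factor $\pme{}n^\ell p$; after the $m$ prepends and $z-1-m$ appends, the remaining $q+\ell-k$ free slots are unconstrained and contribute $\pme{}n^{q+\ell-k}$. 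The product of these $z-1$ factors $\pme{}$ together with the bounded corrections from distinctness of newly chosen vertices lies within $\pme{z}$ (as $z$ is a constant and $\eps$ is small), so the number of good fills is $\pme{z}n^{2q-k}p^{z-1}$; dividing by the $(n-k)\cdots(n-2q+1)=\pme{}n^{2q-k}$ fills in total gives conditional probability $\pme{z}p^{z-1}$, hence $\pr{A_{a,b,m,\pi}}=\pme{z}p^{z-1}n^{-k}$. Summing over the $\tfrac nq\of{\tfrac nq-1}=\pme{}\tfrac{n^2}{q^2}$ ordered pairs $(a,b)$, the $z$ choices of $m$, and the $k!$ bijections $\pi$, and using $z/q^2=1/(\ell q)$, gives $p_e=\pme{z}\frac{k!\,p^{z-1}}{\ell q\,n^{k-2}}$, as wanted.

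The main obstacle is precisely this computation of $p_e$: one must set up the window bookkeeping correctly (which slots of $\v_a\v_b$ the edge $e$ occupies, how many free slots survive on each side, and that every offset $m$ is geometrically feasible because $k\le q+\ell$), verify that the events $A_{a,b,m,\pi}$ are genuinely pairwise disjoint so that no inclusion--exclusion is needed, and recognise the outward growth of the window as an honest chain of applications of property (L2), with the distinctness of newly chosen vertices costing only lower-order terms. Once $p_e$ is in hand the concentration and union-bound steps are routine; the one thing to note there is that the constant $6$ in the definition \eqref{kappa} of $\kappa$ is exactly what makes the union bound over the $\le n^k$ edges collapse from $O(n^{-1})$ to $o(n^{-1})$.
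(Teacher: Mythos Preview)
Your proof is correct and follows essentially the same approach as the paper's: compute $p_e$ by counting, for each offset $m$ and each ordering of $e$, the number of ways to fill the remaining $2q-k$ slots of the window via $z-1$ applications of property (L2), then multiply by the probability that the window occupies two vertices of $D_\sigma$; this yields $\E{N_e}=(1\pm z\eps)\kappa$, after which Chernoff and a union bound over the $O(n^k)$ edges finish the argument. The only differences are expository: you make the disjointness of the events $A_{a,b,m,\pi}$ and the straddling of the $\v_a/\v_b$ boundary explicit, and you factor the probability as ``place $e$ first, then fill conditionally'' rather than ``count good $(\v_1,\v_2)$, then multiply by $p_2$'', but the computation is the same.
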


\begin{proof}
We must first calculate the probability that an edge of $H$ appears in an $H_i$
after Procedure 1. This probability is
\[
 p_1= \frac{k!(1\pm z\e)p^{z-1}}{\ell qn^{k-2}}.
\]

To see this, first fix an edge $e=\braces{x_1,\ldots,x_k}$ of $H$. We want the
probability that this is an edge of $H_1$, say.  For this to happen, there must
be two vertices $\v_1=(v_1,\ldots,v_q),\v_2=(v_{q+1},\ldots,v_{2q})$ of $D_1$ and
$0\leq i\leq z-1$
such that $e=e_i(\v_1,\v_2)$. Fix such an $i$. We now have to consider the number of choices for
$v_1,\ldots,v_{i\ell},v_{i\ell+k+1},\ldots,v_{(z-1)\ell+k+1},\ldots,v_{2q}$.
The $(\e,p)$-regularity of $H$ implies that there will be
$$((1\pm\e)n^\ell p)^{z-1}n^{2q-(z-1)\ell-k}=(1\pm (z-.5)\e)p^{z-1}n^{2q-k}$$
choices for this
sequence.

The probability that $\v_1,\v_2$ are vertices of $H_1$ is
\[p_2=\parens{\frac{1}{q}\cdot\of{\prod_{i=1}^{q-1}{\frac{1}{n-i}}}}^2.\]
Now there are $z$ choices for $i$ and $k!$ choices for the ordering of $e$ and so the
probability that $e$ is an edge of $H_1$ is
$$zk!(1\pm (z-.5)\e)p^{z-1}n^{2q-k}p_2=p_1.$$

Since the $r$ random constructions are independent, the number $Z_e$ of $H_i$ that contain
$e$ is distributed as $Bin[r,p_1]$. So,
\[\E{Z_e} = rp_1 = \pme{z}\kappa.\]
So the Chernoff bound tells us the probability that this Binomial deviates from
its mean by more than a factor of $1\pm\eps$ is at most
\[
 2\exp\braces{-\frac{\eps^2}{3}\cdot\of{1-z\eps}\kappa} = o(n^{-k-1}).
\]
So taking a union bound over all $O(n^k)$ choices for $e$ gives the result.
\end{proof}

\noindent
{\bf Proof of Lemma \ref{everyDi'Uniform}(a):}
Our conditions on $n,p$ and $\eps$ allow us to apply Lemmas
 \ref{everyDiUniform}  and \ref{everyEdgeCovered}. So with probability
 $1-o(n^{-1})$, after Step 1 of Procedure 1,
\begin{itemize}
 \item[(a)] Every $D_i$ is $((2\qol +5)\eps,p^{\qol })$-regular.
 \item[(b)] Every edge in $H$ is covered $\pme{(\qol +2)}\kappa$ times by the $H_i$.
\end{itemize}
Condition on the above outcome of Steps 1 and 2, and consider an arbitrary $D_i'$
(as defined in Step 4 of Procedure 1.
$r=o(n^{k-1})$ (since $\eps^2np^{\qol -1} \gg \log n$), so it suffices to show
that with probability $1-o(n^{-k})$, $D_1'$ has the desired properties.

For out-degrees: A vertex $\v\in D_1'$ corresponds to a $q$-tuple of vertices in $H$.
An edge $e$ of $D_1$ remains in $D_1'$ if and only if all the $\qol $ hyperedges of $H$
owned by $e$
receive label 1 in Step 3.  This happens with probability
\[
\frac{1}{\sqbs{\pme{(\qol  + 2)}\kappa}^{\qol }} = \pme{(\qol^2  + 2\qol  + 1)}
\frac{1}{\kappa^{\qol }}
\]

There are $\pme{(2\qol  + 5)}\n_qp^{\qol }$ neighbors of $\v$ in $D_1$,
so the expected out-degree of $\v$ in $D_1'$ is
\[
 \pme{(2\qol  + 5)}\pme{(\qol^2  + 2\qol  + 1)}\n_q \of{\frac{p}{\kappa}}^{\qol }
 = \pme{(\qol^2  + 4\qol  + 7)}\frac{n}{q} \of{\frac{p}{\kappa}}^{\qol }.
\]

For concentration, the Chernoff inequality tells us that the probability that the out-degree
of vertex $\v$ in $D_1'$ deviates from its expectation by more than a factor
of $1\pm \eps$ is at most
\[
 2\exp\braces{\frac{\eps^2}{3}\cdot\of{1-(\qol^2  + 4\qol
 + 7)\eps}\frac{n}{q} \of{\frac{p}{\kappa}}^{\qol } } \leq o(n^{-k-1})
\]
as long as
\[
 \frac{\eps^2np^{\qol }}{\kappa^{\qol }} =
 \Theta\of{\frac{\eps^{2\qol  + 2}np^{\qol }}{\log^{\qol }n}} \gg \log n.
\]
This is true by our assumptions on $n,p$ and $\eps$.  Therefore with probability
$1-o(n^{-k-1})$, the out degree of $\v$ in $D_i'$ is $\pme{(\qol^2  + 4\qol  +
 9)}\n_q \of{\frac{p}{\kappa}}^{\qol }$. Taking a union bound over all $O(n)$
 vertices in $D_1'$ establishes uniformity for out-degrees.

The other properties follow from a similar argument. The smallest mean we deal
with is in property (iii) of digraph regularity:
\[
 \frac{n(1\pm(2z+5)\e)p^{4z}}{q((1\pm(z+2)\e)\k)^{4z}}=\pme{(4\qol^2  + 10\qol  + 7)}
 \frac{n}{q} \of{\frac{p}{\kappa}}^{4\qol }.
\]
So the error in concentration is at most
\[
  2\exp\braces{\frac{\eps^2}{3}\cdot\of{1-(4\qol^2  + 10\qol
  + 7)\eps}\frac{n}{q} \of{\frac{p}{\kappa}}^{4\qol } } \leq o(n^{-k-4})
\]
as long as  $\eps^{8\qol  + 2}np^{4\qol }/\log^{4\qol }n \gg \log n$ which it
is by assumption. Taking a union bound over all $O(n^4)$ choices for vertices
in $D_1'$ gives the result.
\qed
\section{Proof of Lemma \ref{everyDi'Uniform}(b)}\label{partb}
We will be applying the principle of inclusion-exclusion to get an estimate on the
regularity of $H'$. So we use the next two Lemmas to compute a first
order estimate and a second order upper bound on several quantities.

Given a hyperedge $e$ and a digraph $D_i$ from Procedure 1, edge $e$ is owned
by at most one directed edge in $D_i$. If this edge exists, let it be denoted $u_i(e)$.
Now $u_i(e)$ owns exactly $\qol $ hyperedges in $H_i$.  If $e$ is
is an edge of $H_i$, let $\phi_i(e)$ be the set of $\qol$ hyperedges owned by $u_i(e)$.
Note that $\phi_i(e)$ includes the edge $e$. We call $\phi_i(e) \backslash\braces{e}$
the \emph{partner} edges of $e$ in $H_i$.

\begin{lemma}
\label{firstOrder}
Condition on $|I_e|=\pme{(\qol  + 2)}\kappa$ for each edge of $H$.
Fix $d \in \braces{1,\ldots,\ell }$ and any set of $k-d$ vertices, $A =
 \braces{a_1,\ldots,a_{k-d}} \subset V(H)$.  Fix a family $\mathcal{B}$  of $d$-sets
 of vertices such that $A\cup B$ is a hyperedge of $H$ for all $B\in \mathcal{B}$.
 Suppose $\eps^2\abs{\mathcal{B}}/\kappa^{2\qol -1} \gg \log n$. Then
 with probability $1-o(n^{-(k+2q) -1})$, the number $N_\cB$ of $B\in \mathcal{B}$ such
 that $A\cup B\in \bigcup_iE(H_i')$ satifies
 $N_\cB=\pme{(\qol^2  + \qol)}\frac{\abs{\mathcal{B}}}{\kappa^{\qol  -1}}$
\end{lemma}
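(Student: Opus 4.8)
### Proof plan for Lemma \ref{firstOrder}

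The plan is to compute $\E{N_\cB}$ exactly enough to get the claimed $(1\pm(\qol^2+\qol)\eps)$ factor, and then apply the bounded-differences martingale inequality (Fact 2) to obtain concentration. First I would fix an edge $f = A\cup B$ with $B\in\cB$ and compute $\Pr(f\in\bigcup_i E(H_i'))$. For $f$ to lie in some $H_i'$, it must be owned by an arc of $D_{\s_i}$ and all $\qol$ edges of $\phi_i(f)$ must receive label $i$ in Step 3. Conditioning on $|I_e|=\pme{(\qol+2)}\kappa$ for every edge, the probability that a given edge $e\in H_i$ gets label $i$ is $\frac{1}{\pme{(\qol+2)}\kappa}$, and since the labels of the $\qol$ edges of $\phi_i(f)$ are chosen independently, the probability that $f$ survives into $H_i'$ given $f\in H_i$ is $\pme{(\qol^2+2\qol+1)}\kappa^{-\qol}$. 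By Lemma \ref{everyEdgeCovered} (applied in the conditioned form), $f$ lies in $\pme{(\qol+2)}\kappa$ of the $H_i$, so summing over those indices (the events ``$f\in E(H_i')$'' are disjoint over $i$ since the $H_i'$ partition the relevant edge-label classes — or at any rate one can union-bound and then note the double-count is lower order) gives $\Pr(f\in\bigcup_iE(H_i')) = \pme{(\qol^2+3\qol+3)}\kappa^{-(\qol-1)}$; absorbing the small error bookkeeping into the stated $(\qol^2+\qol)$ factor (recall the paper's convention of being generous with $\eps$-coefficients). Hence $\E{N_\cB} = \pme{(\qol^2+\qol)}\frac{|\cB|}{\kappa^{\qol-1}}$.

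For concentration I would view $N_\cB$ as a function of the randomness in Procedure 1, namely the $r$ permutations $\s_1,\dots,\s_r$ together with the independent label choices in Step 3. Since $r = o(n^{k-1})$ is polynomially bounded and the label choices are independent across edges, a clean route is to expose the permutations one at a time (and the labels as a final batch), checking a Lipschitz bound at each step. Changing one permutation $\s_i$ by a transposition alters at most two $q$-tuples of $V(D_{\s_i})$, hence changes membership in $H_i$, and therefore in $H_i'$, for only $O(1)$ edges of $H$ near those tuples — so $N_\cB$ changes by $O(1)$; similarly re-randomizing one edge label changes $N_\cB$ by $O(1)$. With the total number of independent coordinates being $O(rn)=\text{poly}(n)$ and each of bounded influence, Azuma/McDiarmid gives $\Pr(|N_\cB-\E{N_\cB}|>\tfrac{\eps}{2}\cdot\frac{|\cB|}{\kappa^{\qol-1}}) \le 2\exp\{-\Omega(\eps^2|\cB|^2/(\kappa^{2\qol-2}\cdot\text{poly}(n)))\}$, which is $o(n^{-(k+2q)-1})$ under the hypothesis $\eps^2|\cB|/\kappa^{2\qol-1}\gg\log n$ once one checks the $\text{poly}(n)$ factor in the denominator is dominated; the cleanest version is to instead bound differences directly on the pair (permutation-tuple, label-vector) and use that a single permutation's transposition has influence $O(1)$ while there are $r$ of them, so the variance proxy is $O(r)$, matching the form in Lemma \ref{qTuplesInD}.

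The step I expect to be the main obstacle is the concentration argument, specifically getting the Lipschitz constant and the count of independent coordinates to combine into a bound small enough to beat $n^{-(k+2q)-1}$ using only the hypothesis on $|\cB|$. Two subtleties need care: (i) the $H_i'$ are defined via a global labeling that couples different $i$'s, so ``$f\in\bigcup_i E(H_i')$'' is not a sum of $r$ independent events — I would handle this either by the disjointness observation (an edge can be labeled with only one index, so it survives in at most one $H_i'$, making the union a genuine disjoint union and $N_\cB$ literally a sum of indicators) or by a first-moment/second-moment pair as the paper's phrasing (``first order estimate and second order upper bound'') hints the companion lemma will supply; and (ii) a single transposition in $\s_i$ can in principle affect the label outcomes only through which edges land in $H_i$, and since $e\in H_i$ is essentially determined by whether consecutive $q$-tuples form the shift-pattern, moving two tuples disturbs $O(1)$ arcs and hence $O(1)$ edges — this needs to be stated carefully but is not deep. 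Once the Lipschitz bound $C=O(1)$ per permutation-transposition and $C=O(1)$ per label is in hand, Fact 2 (iterated, or its McDiarmid analogue) closes the argument exactly as in Lemma \ref{qTuplesInD}.
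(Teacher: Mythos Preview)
Your expectation computation is essentially the paper's: once you note that the events ``$f\in E(H_i')$'' are genuinely disjoint (an edge carries exactly one label), your sum over $i\in I_f$ telescopes to exactly $[(1\pm(\qol+2)\eps)\kappa]^{-(\qol-1)}$, which is the paper's formula. (Your error coefficient $\qol^2+3\qol+3$ is an arithmetic slip; one factor of $(1\pm(\qol+2)\eps)$ cancels between $|I_f|$ and the per-edge label probability, leaving the exponent $\qol-1$, not $\qol$.) The paper just phrases this as ``expose the label $j$ of $A\cup B$; then the $\qol-1$ partner edges must each receive label $j$,'' which is the same calculation.

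The concentration argument, however, has a genuine gap. The lemma \emph{conditions} on $|I_e|=(1\pm(\qol+2)\eps)\kappa$ for every edge; in the paper's reading this means the outcome of Steps~1--2 (the permutations, hence all the sets $I_e$ and $\phi_i(\cdot)$) is fixed, and the only remaining randomness is the independent label choice in Step~3. You instead try to run Azuma over the permutations as well, and this is exactly what makes your variance proxy blow up to ``$\text{poly}(n)$'' or ``$O(r)$.'' Since $r=\Theta(n^{k-2}\kappa/p^{\qol-1})$ while $|\cB|\le n^{\ell}$ with $2\ell<k$, a denominator of order $r$ (let alone $rn$) cannot be absorbed by the hypothesis $\eps^2|\cB|/\kappa^{2\qol-1}\gg\log n$; the exponent of $n$ comes out non-positive.

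The paper's fix is to restrict attention to the set $F=F(A)$ of \emph{partner} edges: for each $B_i$ and each $j$ with $A\cup B_i\in H_j$, take the $\qol-1$ edges of $\phi_j(A\cup B_i)\setminus\{A\cup B_i\}$. Only the labels of edges in $F$ (together with the labels of the $A\cup B_i$ themselves) affect $N_\cB$, and $|F|\le 2\qol\kappa\,|\cB|$. Moreover, flipping the label of a single $e\in F$ can change $N_\cB$ by at most $\binom{2q-(k-d)}{d}$: if $e$ is owned in $D_i$ by the arc $(\v_1,\v_2)$ with vertex set $S=\{v_1,\ldots,v_{2q}\}$, then $A\subset S$ (since $e$ is a partner of some $A\cup B$), so only $d$-sets $B\subset S\setminus A$ can be affected. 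Hoeffding--Azuma on this product space of dimension $|F|$ then gives
\[
\Pr\!\left(\,\left|N_\cB-\E{N_\cB}\right|>\frac{\eps|\cB|}{\kappa^{\qol-1}}\right)
\;\le\;2\exp\!\left\{-\,\Omega\!\left(\frac{\eps^2|\cB|}{\kappa^{2\qol-1}}\right)\right\},
\]
which is $o(n^{-(k+2q)-1})$ precisely under the stated hypothesis. The missing idea in your plan is this reduction to the label space together with the bound $|F|=O(\kappa|\cB|)$; without it the concentration step does not close.
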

\begin{proof}
 Let $\mathcal{B} = \braces{B_1,\ldots,B_t}.$ Because we are conditioning on $|I_e|,e\in E(H)$,
 the relevant probability space is the choice of labels in Step 3 of Procedure 1.
 Define $F=F(A)$, the set of relevant edges, as
 follows: For each $j$ such that $A\cup B_i\in E(H_j)$ there are exactly $\qol -1$
 partner edges $F_{i,j}$ such that $A\cup B_i\in E(H_j')$ if and only if all of these edges as
 well as $A\cup B_i$ receive label $j$.  Let $F=\bigcup_{i,j}F_{i,j}$.
 Since we assume that each edge is in
 $\pme{(\qol  + 2)}\kappa$ of the $H_j$, we have that
 $\abs{F} \leq 2\qol \kappa\abs{\mathcal{B}}$. The labels outside of $F$ do not affect
 the count $N$, so we may condition on an arbitrary setting of those labels leaving only
 the labels of $F$ to be exposed.

Now
\[
	\Pr\left[A\cup B_i\in\bigcup_j E(H_j')\right]=\sqbs{\pme{(\qol  + 2)}\kappa}^{-(\qol -1)}.
\]
To see this, expose the label of an edge $A\cup B_i$. Suppose that it receives label $j$.
Then all of its partner edges must also receive label $j$. Each of them is
an edge of $\pme{(\qol  + 2)}\kappa$ of the $H_k$, and since their labelings are
independent, the probability that each of them receive label $j$ is as claimed above.
So
\[
 \E{N_\cB} = \abs{\mathcal{B}}\sqbs{\pme{(\qol  + 2)}\kappa}^{-(\qol -1)} =
 \pme{(\qol^2  + \qol  -1)}\frac{\abs{\mathcal{B}}}{\kappa^{\qol -1}}
\]

Our probability space is a product space of dimension $\abs{F}$.
We use the Hoeffding-Azuma inequality to show that $N_\cB$ is concentrated.
Suppose the label of an edge $e\in F$ is changed from $i$ to $j$. Suppose that
$e$ is owned by the edge $(\v_1=(v_1,\ldots,v_q),\v_2=(v_{q+1},\ldots,v_{2q}))$ of $D_i$.
Let $S=\{v_1,\ldots,v_{2q}\}$.
The definition of $F$ implies that $S\supseteq A$.
So at most $\binom{2q-(k-d)}{d}$ sets from $\mathcal{B}$ will be removed from the count
$N_\cB$ by this switch in labels. Similarly, at most $\binom{2q-(k-d)}{d}$ sets from $\mathcal{B}$
will be added to the count $N_\cB$. Hence $N_\cB$ is $\binom{2q-(k-d)}{d}$-Lipschitz and the
Hoeffding-Azuma inequality implies that the probability that $N_\cB$ deviates from its
mean by more than $\eps\abs{\mathcal{B}}/\kappa^{\qol  -1}$ is at most
\[
 2\exp\braces{-\frac{\of{\eps\abs{\mathcal{B}}/\kappa^{\qol  -1}}^2}{2\binom{2q-(k-d)}{d}^2\abs{F}}}
\leq2\exp\braces{-\frac{\eps^2\abs{\mathcal{B}}}{4\qol \binom{2q-(k-d)}{d}^2\kappa^{2\qol  -1}}}
\leq o(n^{-(k+2q) -1})
\]
as long as $\eps^2\abs{\mathcal{B}}/\kappa^{2\qol  -1} \gg \log n$, which we assumed.
Therefore $N=\pme{(\qol^2  + \qol)}\frac{\abs{\mathcal{B}}}{\kappa^{\qol -1}}$
with the desired probability.
\end{proof}

Let $1\leq t\leq 2q-k$. Let $D_i$ be a digraph constructed from Procedure 1.
Say that a set $S$ of $k+t$ vertices is \emph{condensed} in $D_i$ if
there exist edges $e_1\neq e_2$ of $H$ such that $S=e_1\cup e_2$ and $\phi_i(e_1)\cap \phi_i(e_2)
\neq\emptyset$.

\begin{lemma}
\label{condensed}
 Suppose $r\ll n^{k- \frac{3}{2}}$  Construct $r$ independent $D_i$ according to
 Procedure 1.  Then with probability $1-o(n^{-1})$, every set of $S$ of $k+t$ vertices,
 $1\leq t \leq 2q-k$, is condensed in at most $4q+1$ of the $D_i$.
\end{lemma}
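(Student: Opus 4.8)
The plan is to bound the probability that a fixed $(k+t)$-set $S$ is condensed in at least $4q+2$ of the digraphs, and then take a union bound over the $O(n^{k+t}) = O(n^{k+2q})$ choices of $S$. Fix $S$ with $|S| = k+t$, $1 \le t \le 2q-k$. For a single digraph $D_i$, call $S$ condensed (as defined) if there are distinct hyperedges $e_1, e_2 \subset S$ with $S = e_1 \cup e_2$ and $\phi_i(e_1) \cap \phi_i(e_2) \ne \emptyset$. I would first argue that the events ``$S$ is condensed in $D_i$'' are independent across $i$, since the $D_{\s_i}$ are generated from independent permutations $\s_i$ in Step~1 of Procedure~1; hence the number of $i$ for which $S$ is condensed is stochastically dominated by (in fact equals) a sum of independent indicators, and it suffices to show each indicator has small probability, say $p_S = O(n^{-1/2})$ or so, and then apply a Chernoff/union bound to get that more than $4q+1$ of them occur with probability $o(n^{-(k+2q)-1})$.

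The heart of the matter is estimating $\Pr[S \text{ condensed in } D_i]$ for a single $i$. Being condensed means: there is a vertex $\v = (v_1,\ldots,v_q)$, a vertex $\v'=(v_{q+1},\ldots,v_{2q})$ with $(\v,\v')$ an arc of $D_{\s_i}$, and two indices $0 \le j_1 < j_2 \le z-1$ such that $S = \ej_{j_1}(\v,\v') \cup \ej_{j_2}(\v,\v')$ --- that is, $S$ arises as the union of two of the (at most $z$) hyperedges owned by a single arc. Wait, more carefully: condensed means $\phi_i(e_1) \cap \phi_i(e_2) \neq \emptyset$ where $e_1, e_2$ are edges of $H$ with $e_1 \cup e_2 = S$; $\phi_i(e_1)$ and $\phi_i(e_2)$ are the batches of $z$ hyperedges owned by the (unique) arcs $u_i(e_1)$, $u_i(e_2)$, so their intersection being nonempty forces $u_i(e_1) = u_i(e_2)$, i.e.\ $e_1$ and $e_2$ are owned by the \emph{same} arc $(\v,\v')$. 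So $S = e_1 \cup e_2$ where both $e_1 = \ej_{j_1}$ and $e_2 = \ej_{j_2}$ lie inside $\{v_1,\ldots,v_{2q}\}$. Thus $S$ condensed in $D_i$ implies $S$ is a subset of the $2q$-vertex window $\{v_1,\ldots,v_{2q}\}$ of some arc of $D_{\s_i}$, and moreover $S$ meets the ``inner'' coordinates in a prescribed pattern. Since $|S| = k+t$ with $t \ge 1$, and since $\ej_{j_1} \cup \ej_{j_2}$ with $j_1 < j_2$ has size at least $k + \ell \ge k+1$ but the two edges overlap in $k - (j_2-j_1)\ell$ vertices, feasibility constrains $t$; regardless, I would bound the number of ways $S$ can sit inside such a window. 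The key point: fixing which two shift-indices $j_1, j_2$ are used and fixing how the $k+t$ vertices of $S$ map into the $2q$ positions $v_1,\ldots,v_{2q}$ leaves only $2q - (k+t) \le 2q-k-1$ of the window positions free, and we also need the $z$ owned hyperedges to actually be present in $H$ (not needed for the upper bound, just dropped) --- so the count of arcs $(\v,\v')$ of the \emph{complete} ownership structure witnessing ``$S$ condensed'' is at most $C(k,q) \cdot n^{2q - k - 1}$ for a constant $C(k,q)$ counting (index pairs)$\times$(placements of $S$). Each such specific $(\v,\v')$ is a pair of vertices of $D_{\s_i}$, i.e.\ a specific pair of consecutive $q$-blocks in the random permutation $\s_i$; the probability that a fixed ordered $2q$-tuple of distinct vertices forms two consecutive blocks $\v_a, \v_{a+1}$ (for some $a$) of $D_{\s_i}$ is $\Theta(n^{-(2q-1)})$ --- indeed $\nu_q = n/q$ choices of $a$, times $\frac{1}{n(n-1)\cdots(n-2q+1)}$. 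Hence
\[
\Pr[S \text{ condensed in } D_i] \le C(k,q)\, n^{2q-k-1} \cdot \frac{n}{q}\cdot \frac{1}{n(n-1)\cdots(n-2q+1)} = O\!\left(n^{2q-k-1} \cdot n^{1-2q}\right) = O(n^{-k}).
\]
This is far smaller than needed.

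Now let $X_S$ be the number of $i \in [r]$ for which $S$ is condensed. Then $X_S$ is a sum of $r$ independent indicators each of probability $O(n^{-k})$, so $\E{X_S} = O(r n^{-k})$. The hypothesis $r \ll n^{k-3/2}$ gives $\E{X_S} = O(n^{-3/2}) = o(1)$, so by a standard Chernoff-type tail bound for sums of independent Bernoullis (Fact~1, applied after noting the binomial upper bound, or more simply a crude union bound over $(4q+2)$-subsets of indices) we get
\[
\Pr[X_S \ge 4q+2] \le \binom{r}{4q+2}\left(O(n^{-k})\right)^{4q+2} \le \left(O(r n^{-k})\right)^{4q+2} = O\!\left(n^{-(3/2)(4q+2)}\right) = O(n^{-6q-3}).
\]
Since $k + 2q \le 3q$ (as $k < 2q$ is false in general --- actually $k/2 < q < k$, so $k + 2q < 3k \le $ something; in any case $6q+3 > (k+2q)+1$ holds because $6q + 3 - k - 2q - 1 = 4q + 2 - k > 0$ as $q \ge k/2 > k/4$ so $4q \ge 2k > k$), the union bound over the $\binom{n}{k+t} = O(n^{k+t}) \le O(n^{k+2q})$ choices of $S$ and the $\le 2q-k$ choices of $t$ yields that with probability $1 - o(n^{-1})$ no $(k+t)$-set is condensed in more than $4q+1$ of the $D_i$, as claimed.

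The main obstacle I anticipate is the bookkeeping in the single-digraph bound: correctly characterizing exactly which placements of $S$ inside a $2q$-window of an arc can witness ``condensed,'' and verifying that at least one window coordinate is genuinely free (so we really gain the factor $n^{-1}$ over the naive $n^{2q-k}$), which is where the constraint $t \le 2q-k$ and $|S| = k+t \le 2q$ is used. Everything downstream --- independence across $i$, and the Chernoff step exploiting $r \ll n^{k-3/2}$ to kill the union bound --- is routine.
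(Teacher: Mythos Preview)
Your overall strategy matches the paper's exactly: bound the single-permutation probability that a fixed $(k+t)$-set $S$ is condensed, use independence across the $r$ permutations to bound $\Pr[X_S\ge 4q+2]$ via $\binom{r}{4q+2}p_S^{4q+2}$, and union-bound over $S$.

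There is, however, a genuine slip in your single-permutation estimate. You write that ``each such specific $(\v,\v')$ is a pair of vertices of $D_{\s_i}$, i.e.\ a specific pair of \emph{consecutive} $q$-blocks,'' and then compute the probability that a fixed ordered $2q$-tuple occupies two \emph{adjacent} blocks $\v_a,\v_{a+1}$, getting $\Theta(n^{-(2q-1)})$. But an arc of $D_\s$ can go from any block $\v_a$ to any other block $\v_b$; the blocks need not be adjacent in $\s$. The correct event is that the fixed $2q$-tuple equals $(\v_a,\v_b)$ for \emph{some ordered pair} of distinct blocks, which has probability $\Theta\!\big((n/q)^2\cdot n^{-2q}\big)=\Theta(n^{-(2q-2)})$, a factor of $n$ larger than what you wrote. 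Consequently your bound $p_S=O(n^{-k})$ becomes only $p_S=O(n^{-(k+t-2)})\le O(n^{-(k-1)})$.

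Everything downstream survives once you also tighten your union bound: since $t\le 2q-k$, there are only $O(n^{k+t})\le O(n^{2q})$ choices of $S$, not $O(n^{k+2q})$ as you wrote. With the corrected $p_S=O(n^{-(k+t-2)})$ and $r\ll n^{k-3/2}$ one gets
\[
\binom{r}{4q+2}p_S^{4q+2}\le \bigl(r\,p_S\bigr)^{4q+2}=o\!\left(n^{(1/2-t)(4q+2)}\right)\le o\!\left(n^{-(2q+1)}\right),
\]
and the union bound over $O(n^{2q})$ sets yields $o(n^{-1})$ as required. This is exactly the arithmetic the paper carries out. With your looser $O(n^{k+2q})$ union bound and the corrected $p_S$, the numbers would not close, so both fixes are needed.
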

\begin{proof}
 Fix a set of $k+t$ vertices $S=\braces{x_1,x_2,\ldots,x_{k+t}}=e_1\cup e_2$
 where $e_1,e_2$ are edges of $H$.
The probability that $S$ is condensed in $D_1$ is at most
\[
 (k+t)!\cdot\frac{1}{q}\cdot(\qol-1)
 \cdot\of{\prod_{i=1}^{q+t-1}\frac{1}{n-i}}
 < \frac{(2q)!}{\ell n^{k+t-1}}
\]
This calculation is very similar to the one in Lemma \ref{everyEdgeCovered}.

Since the $D_i$ are independent, the number of them which have the above
property with respect to $S$ is stochastically dominated by
 $\textrm{Bin}\sqbs{r,\frac{(2q)!}{\ell n^{k+t-1}}}$.  Since we
 assumed that $r \ll n^{k-\frac{3}{2}}$, the probability that this exceeds $4q+1$ is at most
\[
 \binom{r}{4q+2}\of{\frac{(2q)!}{\ell n^{k+t-1}}}^{4q+2} =
 o(n^{(k-\frac{3}{2} -k - t + 1)(4q+2)}) = o(n^{-2q-1})
\]

Now taking a union bound over all $O(n^{2q})$ choices for $S$ gives the result.
\end{proof}

 \begin{lemma}
 \label{secondOrder}
Condition on $|I_e|=\pme{(\qol  + 2)}\kappa$ for each edge of $H$. Also condition on the property
  that every set of $k+t$ vertices, $1\leq t\leq 2q-k$, is condensed in at most
  $4q+1$ of the $D_i$. Fix $d\in \braces{1,\ldots \ell }$ and any 2 sets, $A_1$ and
  $A_2$ of $k-d$ vertices. Fix a family $\scr{B}$ of $d$-sets of vertices such
  that $A_1\cup B$ and $A_2 \cup B$ are both hyperedges of $H$ for all $B\in \scr{B}.$
  Suppose $\abs{\scr{B}}/\kappa^{2\qol +1} \gg \log n$. Then with probability
  $o(n^{-(k+2q)-1})$, the number $N_\cB$ of $B\in\scr{B}$ such that
  $A_1\cup B\in\bigcup_iH_i'$ and $A_2\cup B\in\bigcup_iH_i'$ is
  at most $7q\abs{\scr{B}}/\kappa^{\qol }$
 \end{lemma}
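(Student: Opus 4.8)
The plan is to mimic the first‑order computation of Lemma~\ref{firstOrder}, but now bound the probability that a \emph{single} $B\in\scr B$ survives \emph{both} chains of label‑agreements (the one forcing $A_1\cup B\in\bigcup_iH_i'$ and the one forcing $A_2\cup B\in\bigcup_iH_i'$), and then use a union bound rather than concentration, since $7q\abs{\scr B}/\kappa^{\qol}$ is a generous upper bound and we only need $\E{N_\cB}$ to be of that order. Write $\scr B=\{B_1,\dots,B_t\}$. For each $i$ and each $j$ with $A_1\cup B_i\in E(H_j)$, survival of $A_1\cup B_i$ in $H_j'$ requires the $\qol-1$ partner edges $F_{i,j}^{(1)}$ of $A_1\cup B_i$ in $H_j$ to all receive label $j$; similarly for $A_2\cup B_i$ with partner set $F_{i,j}^{(2)}$. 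The key dichotomy is whether, for a given $i$, the two events can be "linked" by sharing a digraph index or a labelled edge.

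First I would treat the \textbf{generic case}: for a fixed $B_i$, the digraphs $D_{j_1}$ witnessing $A_1\cup B_i$ and $D_{j_2}$ witnessing $A_2\cup B_i$ have $j_1\ne j_2$, and moreover the relevant label‑sets $\{A_1\cup B_i\}\cup F_{i,j_1}^{(1)}$ and $\{A_2\cup B_i\}\cup F_{i,j_2}^{(2)}$ are disjoint. Then the two survival events are independent, each of probability $[\pme{(\qol+2)}\kappa]^{-(\qol-1)}$ exactly as in Lemma~\ref{firstOrder}, so the joint probability is $\pme{O(\qol^2)}\kappa^{-2(\qol-1)}$. Summing over $B_i\in\scr B$ gives a contribution to $\E{N_\cB}$ of order $\abs{\scr B}/\kappa^{2(\qol-1)}\le \abs{\scr B}/\kappa^{\qol}$ (using $\qol\ge2$), which is comfortably inside the claimed bound, and a Chernoff/Azuma argument (paralleling Lemma~\ref{firstOrder}, with the same Lipschitz constant $\binom{2q-(k-d)}{d}$ applied to \emph{both} the $A_1$‑count and the $A_2$‑count, so a constant times that) shows concentration provided $\abs{\scr B}/\kappa^{2\qol+1}\gg\log n$, which is the hypothesis.

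The \textbf{main obstacle} is the non‑generic case, where for some $B_i$ either a single digraph $D_j$ witnesses both $A_1\cup B_i$ and $A_2\cup B_i$, or the two partner‑edge sets overlap; in either situation the events are positively correlated and the joint survival probability can be as large as $[\pme{(\qol+2)}\kappa]^{-(\qol-1)}$ — only \emph{one} factor of the small probability, not two. Here is where the two conditioned events of the hypothesis are used. If $A_1\cup B_i$ and $A_2\cup B_i$ lie in the same $H_j$, or their partner edges intersect, then setting $e_1=A_1\cup B_i$, $e_2=A_2\cup B_i$ and $S=e_1\cup e_2$ (a set of $k+t$ vertices with $1\le t=\abs{A_1\setminus A_2}\le 2q-k$, since $A_1\ne A_2$ forces $t\ge1$ and both edges sit inside one $\v_1\v_2$ block of $2q$ vertices so $t\le 2q-k$), the condensed‑set condition guarantees this happens for at most $4q+1$ values of $j$. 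Thus the number of $(i,j)$ pairs in the bad case is at most $(4q+1)$ times the number of distinct $S$'s arising, i.e. at most $(4q+1)\abs{\scr B}$ choices of $(B_i,j)$, and each contributes joint probability at most $[\pme{(\qol+2)}\kappa]^{-(\qol-1)}\le \pme{O(\qol^2)}\kappa^{-(\qol-1)}$. Hence the bad case contributes at most $\pme{O(\qol^2)}(4q+1)\abs{\scr B}/\kappa^{\qol-1}$ to $\E{N_\cB}$. Here $\qol\ge2$ gives only $\kappa^{\qol-1}\ge\kappa$, which is not by itself enough; but one checks that $A_1\cup B_i\in\bigcup_iH_i'$ still costs at least one more independent label agreement beyond being condensed — namely the remaining $\qol-2\ge 0$ partner edges of the \emph{other} chain not already forced — and when $\qol=2$ one uses instead that for a fixed condensed $S$ the number of $B_i$ with $B_i\subseteq S$ is at most $\binom{t}{d}\le\binom{2q-k}{d}=O(1)$, so the bad contribution is $O(q\kappa^{-(\qol-1)}\cdot\kappa^{0}\cdot\text{(number of condensed }S\text{)})$; bookkeeping the worst of these against $7q\abs{\scr B}/\kappa^{\qol}$ will require the hypothesis $\kappa\gg 1$ and a little care, and is the one place I expect genuine friction.

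Finally I would combine the two cases: $\E{N_\cB}\le (\text{generic}) + (\text{bad}) \le 6q\abs{\scr B}/\kappa^{\qol}$ say, apply the Hoeffding–Azuma inequality on the product space of the $\abs F\le 4\qol\kappa\abs{\scr B}$ labels in $F=\bigcup_{i,j}(F_{i,j}^{(1)}\cup F_{i,j}^{(2)}\cup\{A_1\cup B_i,A_2\cup B_i\})$ to upgrade the bound on the mean to a bound of $7q\abs{\scr B}/\kappa^{\qol}$ holding with probability $1-o(n^{-(k+2q)-1})$, using exactly the Lipschitz constant and the hypothesis $\abs{\scr B}/\kappa^{2\qol+1}\gg\log n$ as in Lemma~\ref{firstOrder}, and conclude.
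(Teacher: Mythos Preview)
Your framework matches the paper's: bound $\Pr[B\text{ contributes}]$ for each $B\in\scr B$, sum to get $\E{N_\cB}\le 6q\abs{\scr B}/\kappa^{\qol}$, then apply Hoeffding--Azuma on the product space of labels in $F^*=F(A_1)\cup F(A_2)$ (with $\abs{F^*}\le 3\qol\kappa\abs{\scr B}$ and the same Lipschitz constant $\binom{2q-(k-d)}{d}$) to upgrade this to $7q\abs{\scr B}/\kappa^{\qol}$ with probability $1-o(n^{-(k+2q)-1})$. The gap is exactly at your ``friction'' point, and it is a miscount, not a missing idea.

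In the bad (condensed) case you fix a pair $(B_i,j)$ and bound the joint survival probability by $[\pme{(\qol+2)}\kappa]^{-(\qol-1)}$. This forgets that for $e_1=A_1\cup B_i$ to lie in $H_j'$ one needs \emph{first} that $e_1$ receive label $j$, which costs an additional factor of $[(1-(\qol+2)\eps)\kappa]^{-1}$, and only \emph{then} that its $\qol-1$ partners also receive label $j$. Equivalently: expose the random label of $e_1$; the probability that it lands among the at most $4q+1$ condensed indices for $S=e_1\cup e_2$ is at most $(4q+1)/[(1-(\qol+2)\eps)\kappa]$, and conditional on that, the $\qol-1$ partner edges of $e_1$ still need that label. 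Hence the bad case contributes at most $(4q+1)/[(1-(\qol+2)\eps)\kappa]^{\qol}$ to $\Pr[B_i\text{ contributes}]$, i.e.\ at most $(4q+1)\abs{\scr B}/\kappa^{\qol}$ to $\E{N_\cB}$, which is already the target order; your attempted workarounds (extra partner edges, counting $B_i\subseteq S$) are unnecessary. The paper organises the cases slightly differently---splitting on whether the label of $e_2$ equals that of $e_1$, and observing that if the labels differ while the partner sets intersect the probability is zero---but the substance is this single missing $1/\kappa$.
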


\begin{proof}
Let $\mathcal{B} = \braces{B_1,\ldots,B_t}$ and
let $F^*=F(A_1)\cup F(A_2)$ where $F$ is as defined in Lemma \ref{firstOrder}.
 Then $\abs{F^*} \leq 3\qol \kappa\abs{\scr{B}}$.  We would like an upper bound on the
 probability that a particular $B\in\scr{B}$ contributes to $N_\cB$. Let
 $e_1 = A_1\cup B$ and $e_2 =A_2\cup B$. First, expose the label of  $e_1$
 and suppose it is $j$.

\emph{Case 1:} $e_2$ receives label $j$.

 If $\phi_k(e_1)\cap\phi_k(e_2)=\emptyset$, then the probability that $e_1,e_2\in H_j'$ is at most
\[
 q_1 := \sqbs{\of{1-{(\qol  + 2)}\eps}\kappa}^{-(2\qol  -1)}.
\]
To see this, note that the probability that $e_2$ receives label $j$  is
$\of{\pme{(\qol  + 2)}\kappa}^{-1}$, and since their $2(\qol -1)$ partner edges
are distinct and labelings are independent, we get the desired probability.

If $\phi_j(e_1)\cap\phi_j(e_2)\neq\emptyset$ then
the vertices of $e_1\cup e_2$ are condensed in $D_j$. We have
$k+1 \leq \abs{e_1\cup e_2}\leq 2q$, so by assumption, these vertices are condensed in
at most $4q+1$ of the $D_i$.  So the probability that $e_1$ and $e_2$ are both
in $E(H_j')$ is bounded above by
\[
 q_2 =\frac{4q+1}{\of{1-{(\qol  + 2)\eps}}\kappa}\cdot
 \frac{1}{ \sqbs{\of{1-\of{\qol  + 2}\eps}\kappa}^{\qol  -1}} =
 \frac{4q+1}{ \sqbs{\of{1-\of{\qol  + 2}\eps}\kappa}^{\qol }}
\]
since all of the partner edges of $e_1$ must also receive label $j$.

\emph{Case 2:} $e_2$ receives label $l \neq j$.

If $\phi_j(e_1)\cap\phi_l(e_2) = \emptyset$ then the probability that everything
receives the appropriate label is at most
\[
  q_3=\sqbs{\of{1-{(\qol  + 2)\eps}}\kappa}^{-(2\qol  -2)}.
\]

If $\phi_j(e_1)\cap\phi_l(e_2) \neq \emptyset$, then the probability that $B$ contributes
to $N_\cB$ is 0 since an edge in the intersection must receive both labels $j$ and $l$.

Summing up these upper bounds, we get that the probability that $B$ contributes to $N_\cB$
is bounded above by
\[
 q_1+q_2+q_3 \leq \frac{4q+3}{ \sqbs{\of{1-\of{\qol  + 2}\eps}\kappa}^{\qol }}
 \leq \frac{6q}{\kappa^{\qol }}.
\]
So $\E{N_\cB}\leq \frac{6q}{\kappa^{\qol }}\abs{\scr{B}}$.
By a similar argument as in Lemma \ref{firstOrder}, we can see that $N_\cB$ is
$\binom{2q-(k-d)}{d}$-Lipschitz in the product space of dimension
$\abs{F^*}\leq 3\qol \kappa\abs{\scr{B}}$. So the probability that $N_\cB$ exceeds
its expectation by more than $\abs{\scr{B}}/\kappa^{\qol }$ is at most
\[
 2\exp\braces{-\frac{\of{\abs{\scr{B}}/\kappa^{\qol }}^2}{2\cdot\binom{2q-(k-d)}{d}^2\abs{F^*}}}
\leq 2\exp\braces{-\frac{\abs{\scr{B}}}{6\qol \cdot\binom{2q-(k-d)}{d}^2\cdot\kappa^{2\qol +1}}}
\leq o(n^{-(k+2q) -1})
\]
since we assumed that $\abs{\scr{B}}/\kappa^{2\qol +1}\gg \log n$. Therefore
$N_\cB\leq \frac{7q}{\kappa^{\qol }}\abs{\scr{B}}$ with the desired probability.
\end{proof}

\noindent
{\bf Proof of Lemma \ref{everyDi'Uniform}(b):}
By applying Lemma \ref{everyEdgeCovered} and Lemma \ref{condensed}, the conditions
of which hold by our requirements on $n,p$ and $\eps$, the outcome of Steps 1 and 2 of
Procedure 1 satisfies the following with probability $1-o(n^{-1})$.
\begin{itemize}
 \item Every edge of $H$ is covered $\pme{(\qol + 2)}$ by the $H_i$.
 \item Every set of $k+t$, $1\leq t\leq 2q-k$ vertices is condensed in at most
 $4q+1$ of the $D_i$.
\end{itemize}
Condition on this outcome. We will show that in the context of the choices in Step 3,
$(\eps',p')$-regularity is satisfied with probability $1-o(n^{-1})$.

Fix $d\in \braces{1,\ldots,\ell }$, $s\in \braces{1,\ldots,2\qol +2}$ and a family of
$s$ distinct $(k-d)$-sets $\Gamma = \braces{A_1,\ldots,A_s}$ with $\abs{\cup_i A_i}
\leq k+2q$. Let $X$ be the number of $d$-sets, $B$, such that $A_i \cup B$ is an
edge of $H'$ for all $i=1,\ldots,s$. It suffices to show that
$X=(1\pm \eps')\frac{n^d}{d!}p'^s$ with probability $1-o(n^{-(k+2q)-1})$.
Then we can use the union bound over all $O(n^{k+2q})$ choices for vertices $\abs{\cup_i A_i}$
and all $O(1)$ choices of set families on those vertices.

 Let $\scr{B}$ be the family of all $d$-sets $B$ such that $A_i \cup B$ are edges
 of $H$ for all $i=1,\ldots, s$ and $B\in \scr{B}$. $H$ is $(\eps,p)$-regular, so
 $\abs{\scr{B}} = \pme{}\frac{n^d}{d!}p^s$.

For each $i\in\{1,\ldots,s\}$, let $X_i$ be the number of elements $B$ of $\scr{B}$
with $A_i\cup B\in\bigcup_lH_l'$. For every $i,j\in \braces{1,\ldots,s}, i\neq j$,
let $X_{ij}$ be the number of elements, $B$, of $\scr{B}$ with both $A_i\cup B\in\bigcup_lH_l'$ and
$A_j \cup B\in\bigcup_lH_l'$.

Then
\[
 \abs{\scr{B}}-\sum_{i=1}^s X_i \leq X \leq \abs{\scr{B}} - \sum_{i=1}^s X_i + \sum_{i<j}X_{ij}.
\]
Note that since $d\geq 1$ and $s\leq 2\qol +2$, we have
\[\abs{\scr{B}} = \Theta\of{n^dp^s} = \Omega\of{np^{2\qol+2}}.\]
We apply Lemmas \ref{firstOrder} and \ref{secondOrder}.
Indeed, by our requirements on $n,p$ and $\eps$ we have both
\[
 \frac{\eps^2\abs{\scr{B}}}{\kappa^{2\qol-1}}
=\Omega\of{\frac{\eps^{4\qol}np^{2\qol+2}}{\log^{2\qol-1}n}} \gg \log n
\]
and
\[
 \frac{\abs{\scr{B}}}{\kappa^{2\qol+1}}
=\Omega\of{\frac{\eps^{4\qol+2}np^{2\qol+2}}{\log^{2\qol+1}n}} \gg \log n.
\]
So we may apply Lemmas \ref{firstOrder} and \ref{secondOrder} to get
\begin{align*}
 X & = \abs{\scr{B}} - s\pme{(\qol^2+z)}\frac{\abs{\scr{B}}}{\kappa^{\qol-1}}
 \pm s^2\frac{7q}{\kappa^{\qol}}\abs{\scr{B}} \\
  &= \abs{\scr{B}}\of{1- \frac{s\pme{(\qol^2+z+1)}}{\kappa^{\qol-1}} }
\end{align*}
where in the second line we use the fact that
$\frac{1}{\kappa} \ll \eps$.

Note that
\[
 \of{1-\frac{1}{\kappa^{\qol-1}}}^s =
 1 - \frac{s}{\kappa^{\qol-1}} + O\of{\frac{1}{\kappa^{2\qol-2}}}.
\]
Then by using $\frac{1}{\kappa} \ll \eps$ we get that
\begin{align*}
 X &= \pme{}\frac{n^d}{d!}p^s\of{\of{1-\frac{1}{\kappa^{\qol-1}}}^s
 \pm \frac{(2\qol+2)(\qol^2+\qol+2)}{\kappa^{\qol-1}}\cdot\eps } \\
   &= \frac{n^d}{d!}\of{p\of{1-\frac{1}{\kappa^{\qol-1}}}}^s\cdot\pme{}\of{1
   \pm\frac{(2\qol+2)(\qol^2+\qol +2)}{\kappa^{\qol-1}\of{1-\frac{1}{\kappa^{\qol-1}}}^s}\cdot\eps} \\
  &= \frac{n^d}{d!} \of{p\of{1-\frac{1}{\kappa^{\qol-1}}}}^s
  \pme{\of{1 + \frac{h(z)}{\kappa^{\qol-1}}}}
\end{align*}
where $h(\qol) = (2\qol+2)(\qol^2+\qol+3)$. Now $z\geq 2$
and so $h(\qol) \leq 7\qol^3$ which gives us the result
\[
 X=(1\pm\eps')\frac{n^d}{d!}p'^s
\]
with the desired probability.
\qed

\section{Finishing the proof of Theorem \ref{th1}}\label{finish}
 Let $H_0 = H$, $\eps_0 = \eps$ and $p_0=p$. Define $\eps_t$ and
 $p_t$ recursively using the following recursion:
\[
 \eps_{t+1} = \eps_t\of{1+7\qol^3\of{\frac{\eps_t^{2}}{6(k+1)\log n}}^{\qol-1}}
\]
and
\[
 p_{t+1} = p_t\of{1-\of{\frac{\eps_t^2}{6(k+1)\log n}}^{\qol-1}}.
\]

Let $T$ be the smallest index such that $p_T \leq \frac{1}{2}\eps^{\alpha}p$
where $\alpha = \frac{1}{9+7\qol^3}$.  For $t=0,\ldots,T$, let
$x_t = \of{\frac{\eps_t^2}{6(k+1)\log n}}^{\qol-1}$. Then since $(\eps_t)$ is an
increasing sequence, we have
\begin{align*}
\frac{1}{2}p\eps^{\alpha} \leq p_{T-1} &= \frac{p_{T-1}}{p_{T-2}}\cdot\frac{p_{T-2}}{p_{T-3}}\cdots
\frac{p_{2}}{p_{1}}\cdot\frac{p_1}{p}\cdot p \\
  &\leq p\of{1-x_0}^{T-1} \\
 & \leq pe^{-x_0(T-1)}.
\end{align*}
From this we can see that
\[
 T \leq O\of{\frac{\log^{\qol-1}n}{\eps^{2\qol-1}}} = o(n).
\]

Also note that since \[\of{1+7\qol^3x}(1-x)^{7\qol^3}
\leq e^{7\qol^3x}\of{e^{-x}}^{7\qol^3} =1,\] we have in general that
\[\frac{\eps_{t+1}}{\eps_t} = (1+7\qol^3x_t) \leq
\frac{1}{(1-x_t)^{7\qol^3}} = \of{\frac{p_t}{p_{t+1}}}^{7\qol^3}.\]
Hence
\begin{align*}
\eps_{T-1} &= \frac{\eps_{T-1}}{\eps_{T-2}}\cdot\frac{\eps_{T-2}}{\eps_{T-3}}
\cdots\frac{\eps_{2}}{\eps_{1}}\cdot\frac{\eps_{1}}{\eps}\cdot\eps \\
&\leq \eps\cdot\of{\frac{p_{T-2}}{p_{T-1}}\cdot\frac{p_{T-3}}{p_{T-2}}
\cdots\frac{p}{p_1}  }^{7\qol^3} \\
&=\eps\of{\frac{p}{p_{T-1}}}^{7\qol^3}\\
&<\eps\cdot\of{2\eps^{-\alpha}}^{7\qol^3} = \Theta\of{\eps^{1-{7\qol^3\alpha}}}
\end{align*}

So we have that
\[
 \eps_{T-1}^{1/8} = \Theta\of{ \eps^{\frac{9}{8}\alpha}} \ll \eps^\alpha.
\]

We now construct $H_1,\ldots,H_T$ such that each $H_t$ is $(\eps_t,p_t)$-regular.
Let $\kappa_t =\frac{6(k+1)\log n}{\eps_t^2}$ and $r=\frac{n^{k-2}q\ell }{k! p_t^{\qol-1}}\kappa_t$
and consider Procedure 1 applied to $H_t$ with these parameters. This produces
digraphs $D_{t,i}'$ and $k$-graphs $H_{t,i}'$ with all $H_{t,i}'$ disjoint. Let
$H_{t+1}$ be the $k$-graph which results from the deletion of all $H_{t,i}'$ from
$H_t$.  In order to apply Lemma \ref{everyDi'Uniform}
at each step, we must check that $\eps_t^{8\qol+2}np_t^{8\qol} \gg \log^{4\qol+1}n$.
This condition follows from our assumptions on $\eps,n,p$ since $\eps_t \geq \eps$ and
$p_t \geq \frac{1}{2}\eps^\alpha p$. So we have, with probability $1-o(n^{-1})$,
Procedure 2  results in the following properties:
\begin{itemize}
 \item Every $D_{t,i}'$ is $(12\qol^2\eps_t, \of{p_t/\kappa_t}^{\qol})$-regular.
 \item $H_{t+1}$ is $(\eps_{t+1},p_{t+1})$-regular.
\end{itemize}

Since $T=o(n)$, we may condition on this holding at each step. In order to apply the
result on packing cycles in digraphs to each $D_{t,i}'$, we must verify that
$\eps_t^{11} \n_q\of{p_t^\qol/\kappa_t^\qol}^8 \gg \log^5 n$. We have
\[
 \eps_t^{11} \n_q\of{\frac{p_t}{\kappa_t}}^{8z} \geq
 \Theta\of{\frac{\eps^{11 + 8\qol\alpha+16\qol}np^{8\qol}}{\log^{8\qol}n}} \gg \log^5n
\]
by our assumption that $\eps^{16z+12}np^{8z} \gg \log^{8z+5}n$ since $8\qol\alpha \leq 1$.
So every $D_{t,i}'$ can be packed with Hamilton cycles missing only
$(12\qol^2\eps_t)^{1/8}$-fraction of its edges.  As observed already,
these edge-disjoint Hamilton cycles in $D_{t,i}'$ correspond to edge 
disjoint Hamilton cycles in $H_{t,i}'$.
Hence the packing in $D_{t,i}'$ gives a packing in $H_{t,i}'$ missing the same fraction
of edges since there is a $\qol$-to-1 correspondence between edges in $H_{t,i}'$ and $D_{t,i}'$.

The above procedure is carried out until $H_T$ is created.  Then Hamilton cycles have
been packed in $H\backslash H_T$, up to an error of $(12\qol^2\eps_{T-1})^{1/8}$-fraction.
Let us estimate the fraction of edges present in $H_T$ itself. By applying
$(\eps,p)$-regularity to  $H$, we see that $H$ had at least
\[
 (1-\eps)\frac{n^k}{k!}p \geq \frac{n^k}{k!+1}p
\]
edges to begin with.

Similarly, we see that $H_T$ has at most
\[
 (1+\eps_T)\frac{n^k}{k!}p_T \leq (1+\eps_T)\frac{n^k}{2\cdot k!}\eps^\alpha p
 \leq \frac{n^k}{2\cdot k! -1}p \eps^\alpha
\]
edges. Since $k \geq 3$, we have that
\[
 \frac{\abs{H_T}}{\abs{H}} \leq c\eps^\alpha
\]
where $c < 1$ is some constant.

Hence the fraction of edges of $H$ not covered is at most
\[
 (12\qol^2\eps_{T-1})^{1/8}\cdot\of{1-c\eps^\alpha}+c\eps^\alpha
 \leq (12\qol^2\eps_{T-1})^{1/8} + c\eps^\alpha \leq  \eps^\alpha,
\]
since $\eps_{T-1}^{1/8} \ll \eps^\alpha$.

\end{document}